\newtheorem{theorem}{Theorem}[section]
\newtheorem{lemma}[theorem]{Lemma}
\newtheorem{corollary}[theorem]{Corollary}
\newtheorem{proposition}[theorem]{Proposition}
\newtheorem{claim}[theorem]{Claim}
\theoremstyle{definition}
\newtheorem{remark}[theorem]{Remark}
\newtheorem{example}[theorem]{Example}
\newcommand{\Z}{\mathbf{Z}}
\newcommand{\N}{\mathbf{N}}
\newcommand{\F}{\mathcal{F}}
\newcommand{\E}{\mathcal{E}}
\newcommand{\BP}{\mathcal{P}}
\newcommand{\finbox}{}%{\hspace*{\fill}$\rule{0.2cm}{0.2cm}$}
\renewcommand{\phi}{\varphi}
\begin{document}
\title{List Supermodular Coloring with Shorter Lists}
\author{
Yu Yokoi
\thanks{National Institute of Informatics, Tokyo 101-8430, Japan. 
E-mail: {\tt yokoi@nii.ac.jp}.
%Supported by JST CREST, Grant Number JPMJCR14D2, Japan.
}
}
\date{\today}
\maketitle
\setstretch{1.05}
\vspace{-5mm}
\begin{abstract}
In 1995, Galvin proved that a bipartite graph $G$ admits a list edge coloring 
if every edge is assigned a color list of length $\Delta(G)$, the maximum degree of the graph.
This result was improved by Borodin, Kostochka and Woodall, who proved that
$G$ still admits a list edge coloring if 
every edge $e=st$ is assigned a list of $\max\{d_{G}(s), d_{G}(t)\}$ colors.
Recently, Iwata and Yokoi provided the list supermodular coloring theorem, 
that extends Galvin's result to the setting of Schrijver's supermodular coloring.
This paper provides a common generalization of these two extensions of Galvin's result.
\end{abstract}

\section{Introduction}
An {\em edge coloring} of an undirected graph%
\footnote{In this paper, a bipartite graph means a bipartite multigraph, i.e., parallel edges are allowed.}
 is a function that assigns a color to each edge 
so that no two adjacent edges have the same color.
In 1995, Galvin \cite{Galvin95} proved the list edge coloring conjecture for bipartite graphs,
which states that the list edge chromatic number of a bipartite graph equals its edge chromatic number.   
Since K\H{o}nig's theorem  \cite{Konig16} states that the edge chromatic number of a bipartite graph $G$ equals 
its maximum degree $\Delta (G)$, Galvin's result is written as follows.  
\begin{theorem}[Galvin \cite{Galvin95}]\label{thm:Galvin}
	For a bipartite graph $G$,
	if each edge $e$ has a list $L(e)$ of $\Delta(G)$ colors, 
	then there exists an edge coloring such that every edge $e$ is assigned a color in $L(e)$.
	\finbox
\end{theorem}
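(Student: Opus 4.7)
The plan is to reduce Theorem~\ref{thm:Galvin} to a kernel-based choosability statement applied to the line graph $L(G)$. First I would invoke the kernel lemma of Bondy, Boppana and Siegel: if a digraph $D$ has the property that every induced subdigraph contains a kernel (an independent set that absorbs every other vertex along an out-arc), then the underlying graph is $L$-choosable for any list assignment satisfying $|L(v)| \ge d_D^+(v)+1$. Taking the underlying graph to be $L(G)$, it then suffices to construct an orientation of $L(G)$ whose maximum out-degree is at most $\Delta(G)-1$ and whose induced subdigraphs all admit kernels.

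To build such an orientation, I would apply K\H{o}nig's theorem to obtain a proper edge coloring $c \colon E(G) \to \{1,\dots,\Delta(G)\}$. Writing $(A,B)$ for the bipartition of $G$, I would orient the line-graph arc between two adjacent edges $e_1,e_2$ sharing an endpoint $v$ as follows: if $v \in A$, direct the arc from the lower-colored edge to the higher-colored one, and if $v \in B$, reverse the convention. Since every color appears at most once at a given vertex, the out-degree at an edge $e=ab$ counts the edges at $a$ with color larger than $c(e)$ plus the edges at $b$ with color smaller than $c(e)$; all of these, together with $e$ itself, use distinct colors, so the total is at most $\Delta(G)-1$.

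The conceptual core of the argument is kernel existence. For any $F \subseteq E(G)$, the induced subdigraph on $F$ encodes a preference system on the bipartite graph $(A,B,F)$: each $a \in A$ ranks its incident edges by increasing color, each $b \in B$ by decreasing color. I would then set up the dictionary that a kernel in this subdigraph is precisely a stable matching for these preferences\,---\,independence of the kernel forbids two matched edges sharing a vertex, and the absorbing condition rules out blocking pairs. Existence of a stable matching is guaranteed by the Gale--Shapley algorithm, so the kernel hypothesis is verified and the kernel lemma delivers the theorem.

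The main obstacle I anticipate is precisely this last dictionary: making sure that the ``independent and absorbing'' requirements translate cleanly into the two conditions defining stability, with the asymmetric orientation convention on the $A$- and $B$-sides playing a decisive role. The out-degree bookkeeping and the appeal to K\H{o}nig and Gale--Shapley are routine once that correspondence is pinned down.
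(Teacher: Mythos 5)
Your proposal is correct, and it is essentially Galvin's original kernel-method argument: take a proper $\Delta(G)$-edge-coloring from K\H{o}nig's theorem, orient the line graph with the asymmetric rule on the two sides of the bipartition, bound out-degrees by $\Delta(G)-1$, identify kernels of induced subdigraphs with stable matchings of the corresponding edge subsets (supplied by Gale--Shapley), and finish with the Bondy--Boppana--Siegel kernel lemma. The one detail you must pin down is the one you flag yourself: the preference convention has to mirror the orientation convention (with arcs going from lower to higher colors at $A$-vertices, an $A$-vertex must prefer \emph{higher} colors and a $B$-vertex \emph{lower} colors), since otherwise ``absorbing'' and ``no blocking pair'' do not line up; your phrase ``ranks by increasing color'' is ambiguous on this point, but either consistent pairing works. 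Note, however, that this paper never proves Theorem~\ref{thm:Galvin} --- it is quoted from \cite{Galvin95} --- and its route to the stronger results is genuinely different: kernels and stable matchings are replaced by the reduction of Proposition~\ref{prop:IY}, which asks for auxiliary functions $\pi_{1},\pi_{2}$ with $\pi_{1}(u)+\pi_{2}(u)-1\leq |L(u)|$ dominating $g_{1}$ and $g_{2}$, and these are constructed by induction on the ground set via bunch partitions together with the matching lemma of Borodin et al.\ (Proposition~\ref{prop:BKW}). Your argument is the more elementary and self-contained one for the bipartite edge-coloring case; the paper's machinery buys the supermodular generalization (Theorem~\ref{thm:main}), from which Theorem~\ref{thm:Galvin} follows as the special case where $\F_{1},\F_{2}$ are the families of stars and $g_{1},g_{2}$ are the degree functions.
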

Exploiting Galvin's proof technique for Theorem \ref{thm:Galvin}, 
Borodin, Kostochka and Woodall \cite{BKW97} showed the following stronger version,
in which some elements may have shorter lists.
For a bipartite graph $G$ and a vertex $s$, we denote by $d_{G}(s)$ the degree of $s$ in $G$. 
\begin{theorem}[Borodin, Kostochka and Woodall \cite{BKW97}]\label{thm:BKW}
	For a bipartite graph $G=(S, T; E)$,
	if each edge $e=st$ has a list $L(e)$ of $\max\{d_{G}(s), d_{G}(t)\}$ colors, 
	then there exists an edge coloring such that every edge $e$ is assigned a color in $L(e)$.
	\finbox
\end{theorem}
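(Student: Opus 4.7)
My plan is to follow Galvin's kernel method, which underlies the proof of Theorem~\ref{thm:Galvin}. By the Bondy--Boppana--Siegel kernel lemma, any kernel-perfect digraph $D$ admits a proper list coloring from any family of lists $L$ with $|L(v)| > d^+_D(v)$ for every vertex $v$. Thus it suffices to construct a kernel-perfect orientation $D$ of the line graph $L(G)$ such that every vertex $e = st$ of $L(G)$ has $d^+_D(e) \leq \max\{d_G(s), d_G(t)\} - 1$.

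\textbf{Galvin-type orientation.} Start from a proper $\Delta(G)$-edge-coloring $f\colon E \to \{1, \ldots, \Delta(G)\}$ of $G$ (König's theorem) and orient $L(G)$ by Galvin's rule: for adjacent edges $e, e'$ sharing vertex $v$, direct $e \to e'$ iff ($v \in S$ and $f(e) < f(e')$) or ($v \in T$ and $f(e) > f(e')$). This orientation is kernel-perfect: a kernel of any induced sub-digraph corresponds, via Gale--Shapley, to a stable matching in the sub-instance of edges under the preferences encoded by $f$ (at $S$-vertices, larger colors preferred; at $T$-vertices, smaller). Writing $r_v(e)$ for the rank of $f(e)$ among the colors appearing at $v$, the out-degree of $e = st$ equals $(d_G(s) - r_s(e)) + (r_t(e) - 1)$. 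This yields only Galvin's bound of $\Delta(G) - 1$, which is too weak.

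\textbf{Key lemma and main obstacle.} The sharper bound of Theorem~\ref{thm:BKW} is obtained once $f$ is chosen to satisfy the rank condition
\[
r_t(e) - r_s(e) \leq \max\{0,\; d_G(t) - d_G(s)\} \qquad \text{for every edge } e = st,
\]
since a one-line substitution then gives $d^+_D(e) \leq \max\{d_G(s), d_G(t)\} - 1$ in either case $d_G(s) \geq d_G(t)$ or $d_G(s) < d_G(t)$. The core technical task---and the principal obstacle---is to prove that such a proper coloring $f$ always exists. My approach would be to embed $G$ into a $\Delta(G)$-regular bipartite supergraph by adding dummy vertices and fictitious edges, take any $1$-factorization (König), and then iteratively perform Kempe-chain swaps (exchanging two color classes along a maximal bichromatic alternating path) through any edge whose rank condition is violated. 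The subtle part is devising a potential function on colorings---for instance, a lexicographic ordering on the profile of rank violations over real edges---that can be shown to strictly decrease under each carefully chosen swap, ensuring termination at a globally feasible coloring. Once the key lemma is established, the BBS kernel lemma applied to Galvin's orientation under this $f$ yields the desired list edge coloring and completes the proof.
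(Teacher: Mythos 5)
Your reduction is sound as far as it goes: Galvin's orientation of the line graph is kernel-perfect, the out-degree count $(d_G(s)-r_s(e))+(r_t(e)-1)$ is correct, and the rank condition $r_t(e)-r_s(e)\leq \max\{0,\,d_G(t)-d_G(s)\}$ is exactly equivalent to the bound $d^{+}(e)\leq \max\{d_G(s),d_G(t)\}-1$ that the kernel lemma needs. But the statement you call the ``key lemma'' --- that a proper $\Delta(G)$-edge-coloring satisfying this rank condition always exists --- is precisely the nontrivial content of Theorem~\ref{thm:BKW} beyond Galvin, and you do not prove it. The Kempe-chain sketch is not an argument: no potential function is exhibited, a swap along a bichromatic path changes the ranks at every vertex of the path and can create new violations elsewhere, and starting from a $1$-factorization of a $\Delta$-regular supergraph gives no control at all over the ranks $r_s,r_t$, which must be measured among the colors appearing in $G$ itself (at a low-degree vertex of $G$ the fictitious edges occupy arbitrary colors, so the factorization says nothing about where the real edge sits in the local order). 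So the proposal has a genuine gap at its central step.

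For comparison, neither Borodin--Kostochka--Woodall nor this paper obtains the degree-sensitive bound by massaging a given proper coloring. The engine is the matching lemma reproduced here as Proposition~\ref{prop:BKW}: one repeatedly extracts a nonempty matching $M$ such that every edge meeting $\partial M$ on one side also meets it on the other, handles $M$ at an extreme position of the order, and recurses on the rest; the one-sidedness of $M$ is what lets the bound $\max\{d_G(s),d_G(t)\}$ (rather than $\Delta$) survive the induction. The present paper does not prove Theorem~\ref{thm:BKW} directly at all: it derives it as the special case of Theorem~\ref{thm:main} in which $\F_1,\F_2$ are the stars of the two sides, and the proof of Theorem~\ref{thm:main} replaces the kernel method by Proposition~\ref{prop:IY} (auxiliary functions $\pi_1,\pi_2$) together with an induction on the ground set in Lemma~\ref{lem:key2}, where Claim~\ref{claim:matching} plays exactly the role of the matching extraction above. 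To complete your route you would need to prove your key lemma by some such inductive matching argument (or otherwise); as written, the hardest step is only conjectured.
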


Another generalization of Galvin's result is the list supermodular coloring theorem of
Iwata and Yokoi \cite{IY17}.
It extends Theorem~\ref{thm:Galvin} to the setting of Schrijver's supermodular coloring \cite{Schr85},
which is described below.
%Here we introduce the setting of supermodular coloring proposed by Schrijver .
(We mostly use the same notations as in \cite{IY17}.)

Let $U$ be a finite set.  
We say that $X, Y\subseteq U$ are {\em intersecting}
if none of $X\cap Y$, $X\setminus Y$ and $Y\setminus X$ are empty. 
A family $\F\subseteq 2^{U}$ is called an {\em intersecting family} if
every intersecting pair of $X, Y\in \F$ satisfies $X\cup Y, X\cap Y\in \F$.
A set function $g$ on $\F$ is called  
{\em intersecting-supermodular} if $\F$ is 
an intersecting family and $g$ satisfies the {\em supermodular inequality}
$g(X)+g(Y)\leq g(X\cup Y)+g(X\cap Y)$
for every intersecting pair of $X, Y\in \F$.
%For a function $\pi:U\to \N$ from the ground set into the positive integers, 
%we say that $\pi$ {\em dominates} a function $g:\F\to \Z$
%if $|\pi(X)|\geq g(X)$ holds for every $X\in \F$, where  $\pi(X):=\set{\pi(u)|u\in X}$.
For any positive integer $k\in \N$, we write $[k]:=\{1,2,\dots,k\}$.
A function $\pi:U\to[k]$ {\em dominates} $g:\F\to \Z$
if $|\pi(X)|\geq g(X)$ holds for every $X\in \F$, where $\pi(X):=\set{\pi(u)|u\in X}$.

Let $g_{1}:\F_{1}\to \Z$ and $g_{2}:\F_{2}\to \Z$  be intersecting-supermodular functions.
A function $\pi:U\to [k]$ is called a {\em supermodular $k$-coloring} for $(g_{1},g_{2})$
if it dominates both $g_{1}$ and $g_{2}$.
Let us assume $|X|\geq g_{i}(X)$ for every $i\in\{1,2\}$ and $X\in \F_{i}$.
This condition is clearly necessary for the existence of a supermodular $k$-coloring. 
Schrijver \cite{Schr85} showed that, under this assumption, the minimum $k\in \N$ that admits a supermodular $k$-coloring of $(g_{1}, g_{2})$
equals $\Delta(g_{1}, g_{2})$, where
\[\Delta(g_{1}, g_{2}):=\max\{1, \max\set{g_{i}(X)|i\in \{1,2\},~X\in \F_{i}}\}.\]

As a list coloring version of the Schrijver's result, Iwata and Yokoi \cite{IY17} proved the following list supermodular coloring theorem.
Let $\Sigma$ be a set of colors and $L:U\to 2^{\Sigma}$ be an assignment of color lists to elements. 
A function $\phi:U\to \Sigma$ is called
a {\em list supermodular coloring}  for $(g_{1}, g_{2}, L)$ if  
$\phi$ dominates both $g_{1}$ and $g_{2}$ and every $u\in U$ satisfies $\phi(u)\in L(u)$.

\begin{theorem}[Iwata and Yokoi \cite{IY17}]\label{thm:IY}
Let $g_{1}:\F_{1}\to \Z$ and $g_{2}:\F_{2}\to \Z$ be
intersecting-supermodular functions 
such that $|X|\geq g_{i}(X)$ for every $i\in\{1,2\}$ and $X\in \F_{i}$.
If each element $u\in U$ has a list $L(u)$ of $\Delta(g_{1}, g_{2})$ colors, then
there exists a list supermodular coloring for $(g_{1}, g_{2}, L)$. 
\finbox
\end{theorem}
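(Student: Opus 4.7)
The plan is to generalize Galvin's kernel argument for Theorem~\ref{thm:Galvin} to the supermodular setting. First, setting $k:=\Delta(g_{1},g_{2})$, apply Schrijver's supermodular coloring theorem to fix a base supermodular $k$-coloring $\phi_{0}:U\to [k]$; since $|L(u)|\geq k$ for every $u$, the task is to ``shift'' $\phi_{0}$ to a new coloring $\phi$ that respects the lists while still dominating both $g_{1}$ and $g_{2}$.

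The key construction is an auxiliary digraph $D$ on vertex set $V(D)=\{(u,c):u\in U,\ c\in \Sigma\}$, with arcs oriented using $\phi_{0}$ and the tight-set structure of $\F_{1}\cup \F_{2}$ so that two properties hold simultaneously: (a) the out-degree of each $(u,c)$ is at most $k-1$; and (b) an independent, absorbing set of the induced subdigraph $D':=D[\{(u,c):c\in L(u)\}]$ chooses exactly one colour per element, and the resulting function still dominates $g_{1}$ and $g_{2}$. At each element $u$, the candidates $(u,c)$ are ordered along a line based on the position of $c$ relative to $\phi_{0}(u)$, with the ``up'' and ``down'' halves oriented oppositely according to whether an upward or downward swap would jeopardise a tight constraint in $\F_{1}$ or in $\F_{2}$; this mirrors the role of the two sides $S,T$ in Galvin's proof.

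Once $D$ is set up so that (a) and (b) hold, the standard kernel-method lemma (as used in Galvin's proof) produces a kernel of $D'$ provided that $D$ is kernel-solvable (every induced subdigraph admits a kernel) and that $|L(u)|$ strictly exceeds the maximum out-degree at any $(u,c)$. Both hypotheses are arranged by construction, and such a kernel translates directly into the desired list supermodular coloring $\phi$.

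The principal obstacle is the verification of kernel-solvability. In Galvin's bipartite case the ``cliques'' of the line graph are transitively oriented by the base colouring, so kernels (sinks) exist trivially. In the supermodular setting, the analogues of cliques are the tight members of the intersecting families $\F_{1},\F_{2}$, and these can cross each other rather than meet at a common vertex. My plan to handle an arbitrary induced subdigraph is an uncrossing argument: choose a maximal tight set $X$ in the restricted instance, use the intersecting-supermodular inequality together with $|X|\geq g_{i}(X)$ to extract a ``sink'' candidate on $X$, commit it to the kernel, remove the affected vertices, and induct. Getting this inductive step to close—using the full force of the supermodular inequality on crossing tight sets from $\F_{1}$ and $\F_{2}$ simultaneously—is where the proof really lives and where I expect most of the work to concentrate.
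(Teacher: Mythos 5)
There is a genuine gap, and you have in fact flagged it yourself: the entire argument hinges on the kernel-solvability of the induced subdigraphs of $D'$, and your ``uncrossing'' plan for that step is only a plan --- you state that closing the induction is ``where the proof really lives.'' That step is not a routine verification. Worse, the difficulty begins earlier, at the definition of $D$: in Galvin's proof the kernel method works because edge coloring is a genuine graph coloring problem --- the constraint at a vertex $v$ is that all edges of $\delta_G(v)$ receive pairwise distinct colors, i.e.\ a clique constraint in the line graph, and a kernel of the oriented line graph certifies exactly such pairwise conflicts. In the supermodular setting the constraint $|\phi(X)|\geq g_{i}(X)$ for $X\in \F_{i}$ is not a pairwise conflict condition when $g_{i}(X)<|X|$: it says the \emph{number of distinct colors} on $X$ is large, and two elements of $X$ are perfectly allowed to share a color. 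Independence and absorption in a digraph on pairs $(u,c)$ are inherently local, pairwise conditions, so your requirement (b) --- that any independent absorbing set of $D'$ automatically yields a function dominating $g_{1}$ and $g_{2}$ --- is not something you can arrange by orienting arcs; no construction of $D$ is given (and it is unclear one exists) under which kernels encode these counting constraints. So both the encoding step and the kernel-solvability step are open, and the proposal does not constitute a proof.

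For comparison, the route this paper takes (following Iwata and Yokoi) avoids kernels altogether: Proposition~\ref{prop:IY} reduces the existence of a list supermodular coloring to the existence of two auxiliary functions $\pi_{1},\pi_{2}:U\to\N$ with $\pi_{1}(u)+\pi_{2}(u)-1\leq |L(u)|$ such that $\pi_{i}$ dominates $g_{i}$, and for Theorem~\ref{thm:IY} these are produced directly from a Schrijver supermodular $k$-coloring $\pi$ with $k=\Delta(g_{1},g_{2})$ by setting $\pi_{1}=\pi$ and $\pi_{2}=k+1-\pi$ (Remark~\ref{rem:IY}). If you want to salvage your approach, you would first have to show that the domination constraints can be modeled as clique-type constraints of a kernel-solvable orientation, which is precisely the point where supermodular coloring stops behaving like bipartite edge coloring; the uncrossing machinery you sketch is more naturally deployed inside the $\pi_{1},\pi_{2}$ framework, as in Sections~\ref{sec:BPintro}--\ref{sec:BP} of this paper.
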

The aim of this paper is to provide a common generalization of Theorems~\ref{thm:BKW} and \ref{thm:IY}.

For an intersecting supermodular function $g:\F\to \Z$ on $U$, 
define the family $\E[g]\subseteq \F$ by
\[\E[g]:=\set{X\in \F|g(X)\geq 2,~\not\exists X'\in \F: [X'\subsetneq X, ~g(X')\geq g(X)]}.\] 
%Also, for each element $u\in U$, 
%define %$\E[g](u)$ be the set of members of $\E[g]$ containing $u$, i.e.,
%$\E[g](u):=\set{X\in \E[g]|u\in X}$.
%Note that $\E[g](u)$ can be empty. 
We call $\E[g]$ the {\em effective set family} of $g$. 
A function $d[g]:U\to \N$ is defined by  
\[d[g](u)=\max\{1, \max\set{g(X)|u\in X\in \E[g]}\},\]
%\begin{equation*}
%d[g](u)=
%\begin{cases}
%\max\{1, \max\set{g(X)|X\in \E[g](u)}\}&  (\E[g](u)\neq \emptyset),\\
%1& (\text{otherwise}).
%\end{cases}
%\end{equation*}
where $\max\set{g(X)|u\in X\in \E[g]}$ takes $-\infty$ if 
$\set{X|u\in X\in \E[g]}=\emptyset$.
%For each $u\in U$, we call $d[g](u)$ the {\em degree} of $u$ for $g$.

%The main result of this paper is described as follows.
\begin{theorem}[\bf Main Theorem]\label{thm:main}
Let $g_{1}:\F_{1}\to \Z$ and $g_{2}:\F_{2}\to \Z$ be
intersecting-supermodular functions 
such that $|X|\geq g_{i}(X)$ for every $i\in\{1,2\}$ and $X\in \F_{i}$.
If each element $u\in U$ has a list $L(u)$ of $\max\{d[g_{1}](u), d[g_{2}](u)\}$ colors, then 
there exists a list supermodular coloring for $(g_{1}, g_{2}, L)$. 
\finbox
\end{theorem}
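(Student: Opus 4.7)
The plan is to build on the kernel-based method that Iwata and Yokoi used for Theorem~\ref{thm:IY}, which itself generalizes Galvin's proof of Theorem~\ref{thm:Galvin} to the supermodular setting, and then to refine the out-degree counting in the style of Borodin, Kostochka, and Woodall's proof of Theorem~\ref{thm:BKW}. The starting point is to fix a reference supermodular $k$-coloring $\pi_{0}\colon U\to[k]$ of $(g_{1},g_{2})$, whose existence is guaranteed by Schrijver's theorem as soon as $k\geq \Delta(g_{1},g_{2})$. Using $\pi_{0}$, one forms an auxiliary digraph $D$ on $U$: for each pair $u,v$ lying in a common tight set $X\in \F_{1}\cup \F_{2}$ under $\pi_{0}$, an arc is placed between $u$ and $v$ via the Galvin-type swap rule, in which the comparison of $\pi_{0}(u)$ and $\pi_{0}(v)$ is reversed depending on whether $X\in \F_{1}$ or $X\in \F_{2}$.

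The main lemma, already at the heart of Theorem~\ref{thm:IY}, is that every induced subdigraph of $D$ admits a kernel; this is established by a matroid/supermodular argument exploiting the tightness of $\pi_{0}$ on effective sets. Granted this, a standard Bondy--Boppana--Siegel style list-coloring lemma produces the desired list supermodular coloring as soon as $|L(u)|>d^{+}_{D}(u)$ for every $u\in U$. The task therefore reduces to establishing
\[
d^{+}_{D}(u)\ \leq\ \max\bigl\{d[g_{1}](u),\,d[g_{2}](u)\bigr\}-1.
\]
Writing $d^{+}_{D}(u)=\delta_{1}(u)+\delta_{2}(u)$, where $\delta_{i}(u)$ counts out-arcs arising through $\F_{i}$, a pigeonhole argument over the values of $\pi_{0}$ on an effective set $X\in \E[g_{i}]$ containing $u$ already yields $\delta_{i}(u)\leq d[g_{i}](u)-1$ separately for each $i$, which suffices for the uniform bound of Theorem~\ref{thm:IY}. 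To get the sharper bound with a maximum instead of a sum, the plan is to mimic the BKW trick of locally flipping the comparison rule at $u$ so that out-arcs contributed from one side are turned into in-arcs on the other, thereby making $\max\{\delta_{1}(u),\delta_{2}(u)\}$ the operative quantity.

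The main obstacle will be verifying that this local flipping preserves kernel-perfectness globally. In the bipartite edge coloring case, BKW can freely exploit the very special geometry that distinct stars in $\F_{i}$ are pairwise disjoint; in the general intersecting-supermodular setting, effective sets may overlap non-trivially, and the flips at different elements can interfere. Controlling this interference will require exploiting the defining minimality in $\E[g]$---that no strictly smaller $X'\in \F$ satisfies $g(X')\geq g(X)$---to confine any conflicting flips inside strictly smaller effective sets, so that an induction on $|U|$ (or equivalently on $\sum_{i}|\F_{i}|$) applies. I would also expect to need an uncrossing step on intersecting pairs of effective sets, justified by intersecting-supermodularity of $g_{i}$, to reduce to a ``well-separated'' configuration of effective sets in which the BKW flipping rule can be implemented cleanly and the kernel-perfectness of the resulting digraph can be inherited from the Iwata--Yokoi argument.
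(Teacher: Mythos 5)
Your plan leaves the two load-bearing claims unproven, and both are doubtful as stated. First, the bound $\delta_{i}(u)\leq d[g_{i}](u)-1$ cannot follow from a pigeonhole argument on a single effective set containing $u$: out-arcs at $u$ on side $i$ arise from \emph{all} tight sets of $\F_{i}$ containing $u$, not only from members of $\E[g_{i}]$, and when $u\notin\bigcup\E[g_{i}]$ you have $d[g_{i}](u)=1$, so your inequality demands that $u$ have \emph{no} out-arcs at all on side $i$ --- something a generic Schrijver coloring $\pi_{0}$ with $\Delta(g_{1},g_{2})$ colors will not give you, and which you never arrange. Second, the step that is supposed to turn the sum bound into the max bound --- ``locally flipping the comparison rule at $u$'' --- is not what Borodin--Kostochka--Woodall do, and it is precisely where the argument breaks: flipping at $u$ converts in-arcs at $u$'s neighbours into out-arcs and destroys the global consistency (tied to the fixed coloring $\pi_{0}$) on which Galvin-type kernel-perfectness rests. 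You acknowledge this obstacle yourself, but what you offer in response (confining conflicts to smaller effective sets, an uncrossing step, an induction on $|U|$) is a list of hoped-for ingredients, not an argument. There is also a foundational issue: in the intersecting-supermodular setting it is not known (and is not how Iwata--Yokoi proceed) that a Galvin-style digraph on $U$ built from tight sets is kernel-perfect; their proof deliberately avoids such a digraph, so your ``main lemma, already at the heart of Theorem~\ref{thm:IY}'' is not available off the shelf.

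For comparison, the paper takes a different and more concrete route that bypasses kernels entirely at this level. It invokes Iwata--Yokoi's Proposition~\ref{prop:IY} as a black box, which reduces the theorem to constructing auxiliary functions $\pi_{1},\pi_{2}:U\to\N$ with $\pi_{1}(u)+\pi_{2}(u)-1\leq\max\{d[g_{1}](u),d[g_{2}](u)\}$ and $\pi_{i}$ dominating $g_{i}$ (Lemma~\ref{lem:key}, strengthened to Lemma~\ref{lem:key2} with the extra cap $\pi_{i}(u)\leq d[g_{i}](u)$ to make the induction go through). These functions are built by induction on $|U|$: the bunch partitions $\BP[g_{1}]$, $\BP[g_{2}]$ are formed from maximal effective sets, the BKW matching lemma (Proposition~\ref{prop:BKW}) applied to the bipartite ``partition-incidence'' graph yields a common partial transversal $K$ with the one-sided covering property, one reduces $g_{1},g_{2}$ by $K$, applies induction, and then lifts $\pi_{1}',\pi_{2}'$ back, using Propositions~\ref{prop:BP}--\ref{prop:BP4} to control how $d[g_{i}]$ drops under reduction. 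If you want to salvage your approach, the honest comparison is that the interference problem you flag is exactly what the bunch-partition/partial-transversal machinery is designed to control, and without an analogue of it your kernel-perfectness and out-degree claims remain unsupported.
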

By definition, $\max\{d[g_{1}](u), d[g_{2}](u)\}\leq \Delta(g_{1}, g_{2})$ for every $u\in U$,
and hence Theorem~\ref{thm:main} is an extension of Theorem~\ref{thm:IY}.
Also, Theorem~\ref{thm:main} is a generalization of Theorem~\ref{thm:BKW} as follows: 
For a bipartite graph $G=(S, T;E)$, 
let $\F_{1}:=\set{\delta_{G}(s)|s\in S}$ and $\F_{2}:=\set{\delta_{G}(t)|t\in T}$,
where $\delta_{G}(s)\subseteq E$ is the set of edges incident to $s$.
Define $g_{1}:\F_{1}\to \Z$ by $g_{1}(\delta_{G}(s))=|\delta_{G}(s)|$ and 
$g_{2}:\F_{2}\to \Z$ by $g_{2}(\delta_{G}(t))=|\delta_{G}(t)|$.
Then, $g_{1}$, $g_{2}$ are intersecting-supermodular functions on $E$.
We see that an edge coloring of $G$ with $k$ colors 
is just a supermodular $k$-coloring for $(g_{1}, g_{2})$.
Moreover, for each edge $e=st$, 
the value $\max\{d[g_{1}](e), d[g_{2}](e)\}$ coincides with $\max\{d_{G}(s), d_{G}(t)\}$.
Thus, Theorem~\ref{thm:main} generalizes both Theorems~\ref{thm:BKW} and \ref{thm:IY}.

Observe that, for every $u\in U$, the definition of $d[g_{i}]$ implies
\[\max\{d[g_{1}](u), d[g_{2}](u)\}\leq \max\{1, \max\set{g_{i}(X)|i\in \{1,2\},~u\in X\in \F_{i}}\}.\]
Theorem~\ref{thm:main} then immediately implies the following corollary,
which is weaker than Theorem~\ref{thm:main} but can be stated without introducing  $\E[g_{i}]$ nor $d[g_{i}]$.

\begin{corollary}%[\bf Weaker Version of Main Theorem]
\label{cor:main}
Let $g_{1}:\F_{1}\to \Z$ and $g_{2}:\F_{2}\to \Z$ be
intersecting-supermodular functions 
such that $|X|\geq g_{i}(X)$ for every $i\in\{1,2\}$ and $X\in \F_{i}$.
If each element $u\in U$ has a list $L(u)$ of $\max\{1, \max\set{g_{i}(X)|i\in \{1,2\},~u\in X\in \F_{i}}\}$ colors, then 
there exists a list supermodular coloring for $(g_{1}, g_{2}, L)$. 
\finbox
\end{corollary}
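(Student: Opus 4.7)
The plan is to derive the corollary as a one-line consequence of Theorem \ref{thm:main}, since both results impose the same hypotheses on $g_1$ and $g_2$ and differ only in the list-size requirement. The key observation, already flagged in the paragraph preceding the corollary, is the pointwise inequality
\[\max\{d[g_{1}](u), d[g_{2}](u)\}\leq \max\{1, \max\set{g_{i}(X)|i\in \{1,2\},~u\in X\in \F_{i}}\}\]
for every $u \in U$. I would verify this directly from the definitions: since $\E[g_i]\subseteq \F_i$, the maximum of $g_i(X)$ taken over those $X \in \E[g_i]$ that contain $u$ is dominated by the analogous maximum taken over $X \in \F_i$ that contain $u$, with the convention $\max\emptyset=-\infty$ on both sides. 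Taking the pointwise maximum over $i\in\{1,2\}$ and then with $1$ preserves the inequality.

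With this in hand, the reduction is immediate. For each $u \in U$, I would choose an arbitrary subset $L'(u)\subseteq L(u)$ of size exactly $\max\{d[g_1](u), d[g_2](u)\}$; the inequality above guarantees that such a choice is possible, since by assumption $|L(u)|\geq \max\{1,\max\set{g_{i}(X)|i\in \{1,2\},~u\in X\in \F_{i}}\}$. Applying Theorem \ref{thm:main} to the intersecting-supermodular pair $(g_1, g_2)$ together with the list assignment $L'$ produces a list supermodular coloring $\phi:U\to\Sigma$ for $(g_1, g_2, L')$. Because $L'(u)\subseteq L(u)$ for every $u \in U$, the same $\phi$ is also a list supermodular coloring for $(g_1, g_2, L)$, which is what the corollary demands.

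There is essentially no obstacle to be overcome: all of the substantive work is absorbed into Theorem \ref{thm:main}, and the corollary exists precisely to furnish a self-contained sufficient condition that avoids introducing $\E[g_i]$ and $d[g_i]$. The only step where one might want to be careful is the convention for empty maxima in the definition of $d[g_i]$, which is why I would make the $-\infty$ convention explicit when verifying the inequality above.
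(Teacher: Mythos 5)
Your proof is correct and matches the paper's own (implicit) argument: the paper notes the same pointwise inequality $\max\{d[g_{1}](u), d[g_{2}](u)\}\leq \max\{1, \max\set{g_{i}(X)\mid i\in \{1,2\},~u\in X\in \F_{i}}\}$, which follows from $\E[g_{i}]\subseteq \F_{i}$, and then invokes Theorem~\ref{thm:main} directly. Your explicit step of shrinking each list to a sublist $L'(u)$ of the exact required size is the standard way to make that invocation precise, so there is nothing to add.
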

Corollary~\ref{cor:main} is properly weaker than Theorem~\ref{thm:main}.
(For example, let $g_{1},g_{2}:\F\to \Z$ be the same function 
such that $\F=\{X', X\}$, $X'\subsetneq X$, $|X'|\geq 2$ 
and $g_{i}(X')=g_{i}(X)=2$. 
Then for an element $u\in X\setminus X'$, 
we have $\max\{1, \max\set{g_{i}(X)|i\in \{1,2\},~u\in X\in \F_{i}}\}=2$ 
while $\max\{d[g_{1}](u), d[g_{2}](u)\}=1$.)
However, we can see that Corollary~\ref{cor:main} is still a common generalization of
Theorems~\ref{thm:BKW} and \ref{thm:IY}.

\smallskip
We prove Theorem~\ref{thm:main} by combining ideas of
Iwata and Yokoi \cite{IY17} and of Borodin et al. \cite{BKW97}.
By the result of Iwata and Yokoi (to be described in Proposition~\ref{prop:IY}), we can reduce the problem of finding
a list supermodular coloring to a problem of finding a pair of auxiliary functions
satisfying certain conditions.
We then show the existence of such auxiliary functions (Lemma~\ref{lem:key}), 
which is the main part of this paper.
The proof of this lemma is by induction on the ground set. 
For that, we construct a special bipartite graph induced from
the pair of intersecting-supermodular functions and apply a method (Proposition~\ref{prop:BKW}) used by Borodin et al. for bipartite edge coloring.
%To construct a bipartite graph, we introduce a notion 
%of ``bunch partition,'' which is a partition of a ground of 
%an intersecting superfunctions.
\smallskip

The rest of this paper is organized as follows.
In Section~\ref{sec:Key},
we introduce a key lemma, from which Theorem~\ref{thm:main} follows.
To prove this lemma, Section~\ref{sec:BPintro} introduces the notion of 
``bunch partitions'' defined for intersecting-supermodular functions.
There, we provide their properties, but proofs are postponed to Section~\ref{sec:BP}.
Using bunch partitions, Section~\ref{sec:KeyLem} shows the key lemma.

%This section consists of three subsections.
%In the first part, we reduce our problem to 
%the subsequent sections are devoted to show that Lemma~\ref{lem:key}.
\section{Key Lemma}\label{sec:Key}
In the proof of Theorem~\ref{thm:IY}, 
Iwata and Yokoi proved the following proposition,
which describes a sufficient condition for the existence of a list supermodular coloring 
in terms of two auxiliary functions $\pi_{1}$ and $\pi_{2}$.
%(It is stated as Proposition~3.2 in \cite{IY17}. We use a slightly modified form for convenience.)

\begin{proposition}[Iwata and Yokoi {\cite[Proposition 3.2]{IY17}}]\label{prop:IY}
Let $g_{1}:\F_{1}\to \Z$ and $g_{2}:\F_{2}\to \Z$ be
intersecting-supermodular functions. 
%such that $|X|\geq g_{i}(X)$ for every $i\in\{1,2\}$ and $X\in \F_{i}$.
For an arbitrary function $f:U\to \N$, 
assume that there exist functions $\pi_{1}, \pi_{2}:U\to \N$ satisfying the following conditions.
\begin{enumerate}
	\setlength{\itemsep}{0mm}
	\item[\rm (i)] For every $u\in U$, we have~ $\pi_{1}(u)+\pi_{2}(u)-1\leq f(u)$.
	\item[\rm (ii)] For each $i\in\{1,2\}$,  $\pi_{i}$ dominates $g_{i}$. 
	\end{enumerate}
Then, for any  $L:U\to 2^{\Sigma}$ such that $|L(u)|=f(u)~(\forall u\in U)$, 
there exists a list supermodular coloring for $(g_{1},g_{2}, L)$.
\finbox
\end{proposition}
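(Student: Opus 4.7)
The plan is to adapt Galvin's kernel-based list coloring technique to the supermodular setting, using $\pi_1, \pi_2$ to orient an auxiliary conflict digraph on $U$.

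\emph{Reduction to a pairwise condition.}
Call $u, u'\in U$ \emph{linked} if there exist $i\in \{1,2\}$ and $X\in \F_i$ with $u, u'\in X$ and $\pi_i(u)\ne \pi_i(u')$. My first step is to verify that every $\phi: U\to \Sigma$ with $\phi(u)\in L(u)$ and $\phi(u)\ne \phi(u')$ on every linked pair is already a list supermodular coloring. Indeed, fixing $X\in \F_i$ and a color $\sigma\in \phi(X)$, all elements $u\in \phi^{-1}(\sigma)\cap X$ must share a common $\pi_i$-value (otherwise two of them would be linked), so sending $\sigma$ to that common value yields a well-defined surjection $\phi(X)\to \pi_i(X)$, and hence $|\phi(X)|\geq |\pi_i(X)|\geq g_i(X)$ by (ii). It thus suffices to construct a proper list coloring of the conflict graph $G$ on $U$ whose edges are the linked pairs.

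\emph{Kernel-based coloring of $G$.}
Next I would orient $G$ into a digraph $D$ using $\pi_1, \pi_2$: for each linked pair, orient $u\to u'$ by comparing $\pi_i$-values (via the $i$ for which they differ, with a consistent tie-breaking rule when both $i=1$ and $i=2$ apply). The out-degree of $u$ in $D$ ought to be bounded by $(\pi_1(u)-1)+(\pi_2(u)-1)\leq f(u)-1$ via (i), since out-neighbors group by the $\pi_i$-values they realize, of which there are at most $\pi_i(u)-1$ strictly smaller; ensuring that the count distributes thinly across $\pi_i$-values requires the intersecting-supermodular structure of $g_i$ together with (ii). Assuming this bound, the Bondy--Boppana--Siegel theorem yields the desired proper $L$-coloring of $G$ as soon as one shows that every induced sub-digraph of $D$ admits a kernel.

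\emph{Main obstacle.}
The crux is kernel existence in every induced sub-digraph. Galvin's bipartite argument extracts kernels from stable matchings almost for free, but here $D$ mixes two orientations driven by $\pi_1$ and $\pi_2$, and intersecting-supermodularity of $g_1, g_2$ must be invoked delicately to ensure that a stable absorbing set persists after restricting to $\set{u\in U|\sigma\in L(u)}$ for any color $\sigma\in \Sigma$. Reconciling this with the out-degree bound from (i), so that both Galvin-style requirements of the kernel method hold simultaneously, is where the combinatorial heart of the argument lies.
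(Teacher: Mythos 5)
Your Step 1 reduction, while logically sound as an implication, replaces the goal by a strictly stronger statement that is simply false with the given list sizes, so no refinement of the kernel machinery in Step 2 can rescue the plan. Concretely, let $U=\{u_1,u_2,u_3,u_4\}$, $\F_1=\F_2=\{U\}$, $g_1(U)=g_2(U)=2$, and set $\pi_1(u_1)=\pi_1(u_2)=1$, $\pi_1(u_3)=\pi_1(u_4)=2$, $\pi_2(u_1)=\pi_2(u_3)=1$, $\pi_2(u_2)=\pi_2(u_4)=2$. Then (ii) holds since $|\pi_i(U)|=2\geq g_i(U)$, and (i) holds with $f(u_1)=1$, $f(u_2)=f(u_3)=2$, $f(u_4)=3$. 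Every pair of elements is linked, so your conflict graph is $K_4$; taking $L(u_1)=\{x\}$, $L(u_2)=L(u_3)=\{x,y\}$, $L(u_4)=\{x,y,z\}$, there is no proper list coloring of $K_4$ (the vertex $u_1$ must take $x$, forcing $u_2$ and $u_3$ both into $\{y\}$), yet a list supermodular coloring exists exactly as the proposition promises (e.g.\ $\phi(u_1)=x$, $\phi(u_4)=z$, the rest arbitrary, since only two distinct colors are needed inside $U$). The slack you discard is essential: a list supermodular coloring needs only $g_i(X)$ distinct colors inside each $X\in \F_i$, not distinct colors on every linked pair.

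The out-degree estimate in Step 2 is also false as stated, for a related reason: condition (ii) only gives $|\pi_i(X)|\geq g_i(X)$, so $\pi_i$ need not be injective on a member of $\F_i$, and arbitrarily many out-neighbours of $u$ may realize the same smaller $\pi_i$-value. What is bounded by $\pi_i(u)-1$ is the number of distinct values among the out-neighbours, not their number; blowing up the example above (large $U$, $g_i\equiv 2$, $\pi_i$ two-valued) gives out-degrees growing linearly in $|U|$ while $f$ stays at most $3$, so the hypothesis $|L(u)|\geq d^{+}(u)+1$ of the Bondy--Boppana--Siegel/Galvin kernel lemma cannot be met. This is precisely where the supermodular setting departs from Galvin's: in a proper edge coloring the colors seen at a vertex are pairwise distinct, which is what makes ``number of smaller colors'' equal to ``number of out-neighbours'' there. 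Note also that the present paper does not prove this proposition at all; it is imported as a black box from \cite{IY17}, where the argument does not pass through a proper coloring of any conflict graph but assigns colors one at a time to suitably chosen subsets while maintaining auxiliary functions satisfying (i)--(ii) against the shrunken lists, so that the quantity forced to drop when an element loses a color is a $\pi_i$-value rather than an out-degree.
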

Suppose that $(g_{1}, g_{2}, L)$ is provided.
Proposition~\ref{prop:IY} says that,  to show the existence of a list supermodular coloring, 
it suffices to find auxiliary functions $\pi_{1}, \pi_{2}$ satisfying conditions (i) and (ii)
for $f$ such that $f(u)=|L(u)|$ for each $u\in U$.
Indeed, Iwata and Yokoi proved Theorem~\ref{thm:IY} by showing the existence of 
such $\pi_{1}, \pi_{2}$ for the constant function $f=\Delta(g_{1}, g_{2})$.
In this case, the construction of $\pi_{1}$ and $\pi_{2}$ can be easily done by using Schrijver's result
(see Remark~\ref{rem:IY}).
%Indeed, Iwata and Yokoi proved Theorem~\ref{thm:IY} 
%by providing $\pi_{1}, \pi_{2}$ satisfying (i) and (ii) where $f$ is a constant function with value $\Delta(g_{1}, g_{2})$

In this paper, we deduce Theorem~\ref{thm:main} from Proposition~\ref{prop:IY}
by constructing $\pi_{1}$ and $\pi_{2}$ in a more careful manner.
We show the following lemma.
\begin{lemma}[\bf Key Lemma]\label{lem:key}
For any
intersecting-supermodular functions 
$g_{1}:\F_{1}\to \Z$ and $g_{2}:\F_{2}\to \Z$ 
such that $|X|\geq g_{i}(X)$ for every $i\in\{1,2\}$ and $X\in \F_{i}$,
there exist functions $\pi_{1}, \pi_{2}:U\to \N$ satisfying {\rm (i)} and {\rm (ii)}
with $f(u)$ defined by $f(u)=\max\{d[g_{1}](u), ~d[g_{2}](u)\}$.
\finbox
\end{lemma}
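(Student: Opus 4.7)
My plan is to prove the Key Lemma following the strategy announced in the excerpt: combine the Iwata--Yokoi auxiliary-function framework (Proposition~\ref{prop:IY}) with the bipartite-edge-coloring technique of Borodin--Kostochka--Woodall (to be stated as Proposition~\ref{prop:BKW}), using bunch partitions of $g_1$ and $g_2$ as the bridge. I would run the argument by induction on $|U|$. The base case is trivial, and for the uniform subcase $f\equiv\Delta(g_1,g_2)$ the original Iwata--Yokoi template applies directly: Schrijver's theorem supplies a supermodular $\Delta$-coloring $\pi$, and $\pi_1:=\pi$, $\pi_2:=\Delta+1-\pi$ satisfy (i) with equality while (ii) follows because affine reflection preserves set cardinalities and hence preserves domination.

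The heart of the argument is the non-uniform case. Here I would use the bunch-partition machinery of Section~\ref{sec:BPintro} to obtain, for each $i\in\{1,2\}$, a partition of $U$ into bunches on which $\pi_i$ is to be injective and whose incidence degrees coincide with $d[g_i]$. I would then build an auxiliary bipartite (multi)graph $H$ with one vertex per $g_1$-bunch, one vertex per $g_2$-bunch, and one edge per element $u\in U$ joining the two bunches containing $u$. With this setup the required list length at each edge becomes $\max\{d[g_1](u),d[g_2](u)\}=f(u)$, matching the BKW degree bound exactly. Proposition~\ref{prop:BKW} then yields a proper edge coloring $\phi\colon U\to\N$ with $\phi(u)\in[f(u)]$, and by reading off $\pi_1$ and $\pi_2$ from the position of $\phi(u)$ within the two bunches through $u$ one obtains condition (i). Condition (ii) follows from the bunch-partition property that every $X\in\E[g_i]$ meets at least $g_i(X)$ bunches of $g_i$, together with the easy reduction of domination on $\F_i$ to domination on $\E[g_i]$ using that any $X\in\F_i\setminus\E[g_i]$ with $g_i(X)\geq 2$ contains a proper $X'\in\F_i$ with $g_i(X')\geq g_i(X)$.

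The principal obstacle, which I expect to occupy Section~\ref{sec:BP}, is the construction and analysis of the bunch partition itself. One must show that bunches can be chosen so that (a) every set $X\in\E[g]$ of value $g(X)=k$ is covered by at least $k$ distinct bunches, and (b) each bunch's degree in the auxiliary bipartite graph equals $\max\{g(X)\mid X\in\E[g],\ X\supseteq B\}$, so that BKW's degree bound matches $f$ pointwise. A secondary subtlety is extracting $(\pi_1,\pi_2)$ from $\phi$ when members of a single bunch have different $f$-values: a workable remedy is to order each bunch by decreasing $f$-value and label consecutively, so that the sum bound $\pi_1(u)+\pi_2(u)\leq f(u)+1$ is maintained while distinct values on each bunch are preserved. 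Once the bunch partition is in place and these labellings are verified, the construction yields $\pi_1,\pi_2$ satisfying (i) and (ii), completing the proof of Lemma~\ref{lem:key}; Theorem~\ref{thm:main} then follows by Proposition~\ref{prop:IY}.
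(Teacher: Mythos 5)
Your plan has a genuine gap at its core: you treat the auxiliary bipartite graph $H$ of bunches as if its vertex degrees were the values $d[g_i]$, and then try to obtain, in one shot, a proper edge coloring $\phi$ of $H$ with $\phi(u)\in[f(u)]$. But the degree of the vertex corresponding to a bunch $P$ in $H$ is $|P|$, and in general $|P|$ is strictly larger than $g_i(P)=d[g_i](u)$ (the definitions only give $|P|\geq g_i(P)$). Your hoped-for property (b), that each bunch's degree in $H$ equals the relevant $g$-value, is simply false for the bunch partition as defined, and no such coloring need exist: if $g_1$ has a single bunch $P=U$ with $|U|=5$ and $g_1(P)=2$ while all $g_2$-bunches are singletons, then $f\equiv 2$ but any proper edge coloring of $H$ needs $5$ colors at the $P$-vertex. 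Relatedly, your justification of condition (ii) relies on the claim that every $X\in\E[g_i]$ meets at least $g_i(X)$ bunches of $g_i$; in fact every $X\in\E[g_i]$ is contained in a \emph{single} bunch (Claim~\ref{claim:BP01}). Domination only requires $g_i(X)$ distinct values on $X$, not injectivity on bunches; injectivity on bunches is both unnecessary and unattainable within the bound $f$. You also misread Proposition~\ref{prop:BKW}: it is not an edge-coloring statement but a matching lemma (a nonempty matching $M$ such that every edge meeting a matched $S$-vertex also meets a matched $T$-vertex), and invoking Theorem~\ref{thm:BKW} on $H$ instead does not help because of the degree mismatch above. The ``order each bunch by decreasing $f$-value'' patch does not repair any of this.

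The paper's actual mechanism, which is absent from your proposal, is an induction on $|U|$ with a \emph{strengthened} statement. The same bipartite graph of bunches is built, but Proposition~\ref{prop:BKW} is used only to extract a nonempty common partial transversal $K$ of $\BP[g_1]$ and $\BP[g_2]$ with a one-sided covering property (Claim~\ref{claim:matching}). One then reduces $g_1,g_2$ by $K$ (which keeps intersecting supermodularity and the bound $|X|\geq g_K(X)$, and strictly decreases $d[g_i]$ on elements whose bunch meets $K$, Propositions~\ref{prop:BP3} and~\ref{prop:BP4}), applies induction on $U\setminus K$, and lifts: $\pi_1\equiv 1$ and $\pi_2(u)=d[g_2](u)$ on $K$, with $\pi_1'$ shifted by $+1$ on bunches hit by $K$. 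Crucially, the induction carries the extra invariant $\pi_i(u)\leq d[g_i](u)$ (condition (iii) of Lemma~\ref{lem:key2}); this is what guarantees, when verifying domination of $g_2$ on sets meeting $K$, that the color $g_2(\hat P)$ assigned on $K$ is not already used on $\hat Z\setminus K$. Without this recursive peeling and the added invariant, the construction you describe does not yield functions satisfying (i) and (ii), so the proof needs to be reworked along these lines rather than via a single application of an edge-coloring theorem to the bunch graph.
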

Once Lemma~\ref{lem:key} is proved, we can immediately obtain Theorem~\ref{thm:main} 
by combining it with Proposition~\ref{prop:IY}.
The remainder of this paper is devoted to showing Lemma~\ref{lem:key}.

\begin{remark}\label{rem:IY}
For reference, we provide the method of Iwata and Yokoi \cite{IY17}
for a construction of $\pi_{1}$ and $\pi_{2}$ satisfying (i) and (ii) with $f$ defined by $f(u)=\Delta(g_{1}, g_{2})$ for every $u\in U$.
Let $k:=\Delta(g_{1}, g_{2})$.
Take a supermodular $k$-coloring $\pi:U\to[k]$ for $(g_{1}, g_{2})$,
whose existence is guaranteed by the result of Schrijver \cite{Schr85}.  
Define $\pi_{1}, \pi_{2}:U\to [k]$ by $\pi_{1}(u):=\pi(u)$ and $\pi_{2}(u):=k+1-\pi(u)$ for each $u\in U$.
Then (i) holds because $\pi_{1}(u)+\pi_{2}(u)-1=k=f(u)$ for every $u$.
Also (ii) holds as $g_{i}(X)\leq |\pi(X)|=|\pi_{1}(X)|=|\pi_{2}(X)|$ for any $i\in\{1,2\}$ and $X\in \F_{i}$.
%Proposition~\ref{prop:IY} then implies Theorem~\ref{thm:IY}. 
\end{remark}
\section{Bunch Partitions}\label{sec:BPintro}
To show Lemma~\ref{lem:key},
this section introduces the notion of  bunch partitions for intersecting-supermodular functions.
%This will be useful in our inductive proof of Lemma~\ref{lem:key}. 
This structure connects our supermodular coloring setting 
to a technique on bipartite graphs used by Borodin et al. \cite{BKW97}.

Let $g:\F\to \Z$ be an intersecting-supermodular function on $U$. 
%\textcolor{red}{$|X|\geq g_{i}(X)$ is needed?}
For a subset $K\subseteq U$, the {\em reduction} of $g$ by $K$ is 
the function $g_{K}:\F_{K}\to \Z$ defined by
$\F_{K}=\set{Z\setminus K|Z\in \F}$ and
\begin{align*}
g_{K}(X)=\max\set{\hat{g}_{K}(Z)|Z\in \F,~ Z\setminus K=X}\quad (X\in \F_{K}),
\end{align*}
where
$\hat{g}_{K}(Z)=g(Z)-1$ for $Z\in \F$ with $Z\cap K\neq\emptyset$
and $\hat{g}_{K}(Z)=g(Z)$ for $Z\in \F$ with $Z\cap K=\emptyset$.
%where $\hat{g}_{K}:\F\to \Z$ is defined by
%\begin{equation*}
%\hat{g}_{K}(Z)=
%\begin{cases}
%g(Z)-1&  (Z\in \F,~Z\cap K\neq\emptyset),\\
%g(Z)& (Z\in \F,~Z\cap K=\emptyset).
%\end{cases}
%\end{equation*}
The following fact is known (cf., Frank \cite{Frankbook}, Iwata and Yokoi \cite[Claim 2.1]{IY17}).
\begin{claim}\label{claim:reduction}
	The reduction $g_{K}:\F_{K}\to \Z$ is an intersecting-supermodular function.
\finbox
\end{claim}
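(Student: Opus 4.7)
The plan is to verify in turn the two required properties of an intersecting-supermodular function: first that $\F_K$ is an intersecting family, and then that $g_K$ satisfies the supermodular inequality on intersecting pairs.

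For the family part, I would take any intersecting $X_1, X_2\in \F_K$ together with pre-images $Z_1, Z_2\in \F$ such that $Z_i\setminus K=X_i$. The key observation is that since every element of $X_i$ lies outside $K$, we have $X_1\setminus X_2\subseteq Z_1\setminus Z_2$, $X_2\setminus X_1\subseteq Z_2\setminus Z_1$, and $X_1\cap X_2\subseteq Z_1\cap Z_2$; hence $Z_1,Z_2$ are themselves an intersecting pair in $\F$. Intersecting-closedness of $\F$ gives $Z_1\cup Z_2,\,Z_1\cap Z_2\in \F$, and taking the set-minus-$K$ yields $X_1\cup X_2,\,X_1\cap X_2\in \F_K$.

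For the supermodular inequality, I would choose $Z_1, Z_2\in \F$ that attain the maxima defining $g_K(X_1)$ and $g_K(X_2)$. By the argument above, $Z_1, Z_2$ are intersecting in $\F$, so $g(Z_1)+g(Z_2)\leq g(Z_1\cup Z_2)+g(Z_1\cap Z_2)$. The task then reduces to comparing the number of $-1$ penalties accumulated on each side of the corresponding inequality for $\hat g_K$, which I would handle by case analysis on which of $Z_1\cap K$, $Z_2\cap K$, and $(Z_1\cap Z_2)\cap K$ are empty. Noting that $(Z_1\cup Z_2)\cap K\neq\emptyset$ whenever either $Z_i\cap K\neq\emptyset$, each of the four cases reduces either to the bare supermodular inequality or to that inequality with at most one unit of slack absorbed on the right. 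Finally, since $Z_1\cup Z_2$ and $Z_1\cap Z_2$ are particular members of $\F$ mapping to $X_1\cup X_2$ and $X_1\cap X_2$ respectively, we get $\hat g_K(Z_1\cup Z_2)\leq g_K(X_1\cup X_2)$ and $\hat g_K(Z_1\cap Z_2)\leq g_K(X_1\cap X_2)$, closing the chain of inequalities.

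The only mildly delicate point is the case in which both $Z_1$ and $Z_2$ meet $K$ but $Z_1\cap Z_2$ does not: the left side then carries two $-1$'s while the right carries only one. Since the original supermodular inequality still gives $g(Z_1)+g(Z_2)\leq g(Z_1\cup Z_2)+g(Z_1\cap Z_2)$, this single unit of asymmetry in the penalties is absorbed with room to spare, so no extra hypothesis is needed. This is the subtlety worth flagging; everything else is routine.
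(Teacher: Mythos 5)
Your proof is correct and complete: the containments $X_1\setminus X_2\subseteq Z_1\setminus Z_2$, $X_2\setminus X_1\subseteq Z_2\setminus Z_1$, $X_1\cap X_2\subseteq Z_1\cap Z_2$ do make the preimages intersecting, the indicator $Z\mapsto \min\{1,|Z\cap K|\}$ is submodular so $\hat g_{K}=g-\min\{1,|\cdot\cap K|\}$ keeps the supermodular inequality on intersecting pairs (your ``delicate'' case only makes the left side smaller), and passing to $g_{K}$ via $\hat g_{K}(Z_1\cup Z_2)\leq g_{K}(X_1\cup X_2)$, $\hat g_{K}(Z_1\cap Z_2)\leq g_{K}(X_1\cap X_2)$ closes the argument. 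The paper gives no proof of this claim, citing it as known from Frank and from Iwata--Yokoi; your argument is exactly the standard one found there, so nothing further is needed.
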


Recall that the effective set family $\E[g]$ is a collection of subsets $X\in \F$
such that $g(X)\geq 2$ and no proper subset $X'\in \F$ satisfies $g(X')\geq g(X)$.
From $\E[g]$,
define a family $\BP[g]$ by
\[\textstyle{\BP[g]:=\set{X\in \E[g]|\not\exists X'\in \E[g]: X\subsetneq X'}\cup \set{\{u\}|u\in U\setminus \bigcup \E[g]}}.\]
%where $\bigcup \E[g]=\bigcup\set{X|X\in \E[g]}$. 
That is, $\BP[g]$ contains
all maximal members of $\E[g]$ and singleton sets of elements not included in any member of $\E[g]$ (see Figure~\ref{fig1} for an example).
From the definitions of $\E[g]$ and $\BP[g]$, we can obtain the following two claims.
The first is clear by definition.
\begin{claim}\label{claim:BP01}
For any $X\in \E[g]$, there exists $P\in \BP[g]\cap \E[g]$ such that $X\subseteq P$.
\finbox
\end{claim}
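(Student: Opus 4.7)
The plan is to recognize that this claim is essentially a finiteness observation: $\BP[g]\cap \E[g]$ coincides with the family of inclusion-maximal elements of $\E[g]$, and in any finite poset every element lies below a maximal element.

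First, I would verify that $\BP[g]\cap \E[g]$ is exactly the set of maximal members of $\E[g]$. Looking at the definition of $\BP[g]$, its first piece $\{X\in\E[g] : \nexists X'\in\E[g],\, X\subsetneq X'\}$ is already contained in $\E[g]$ and gives precisely the maximal members. The second piece consists of singletons $\{u\}$ for $u\in U\setminus \bigcup \E[g]$; such a singleton cannot lie in $\E[g]$, because if $\{u\}\in\E[g]$ then $u\in\bigcup\E[g]$, contradicting the choice of $u$. Hence the second piece contributes nothing to the intersection, and $\BP[g]\cap\E[g]$ is exactly the set of inclusion-maximal members of $\E[g]$.

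Then, given any $X\in\E[g]$, since $U$ is finite the family $\E[g]\subseteq 2^U$ is also finite. I would pick $P\in\E[g]$ with $X\subseteq P$ and $|P|$ as large as possible (the collection $\{Y\in\E[g] : X\subseteq Y\}$ is nonempty because it contains $X$). By maximality of $|P|$, no $P'\in\E[g]$ satisfies $P\subsetneq P'$, so $P$ is an inclusion-maximal element of $\E[g]$, i.e.\ $P\in \BP[g]\cap\E[g]$, while $X\subseteq P$ by construction.

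There is no genuine obstacle here; the whole argument is a one-line maximality argument in a finite family, and the only substantive point is the quick check that the singleton part of $\BP[g]$ never meets $\E[g]$.
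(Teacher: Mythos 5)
Your proof is correct and matches the paper's intent: the paper simply notes this claim is clear by definition, and your argument (that $\BP[g]\cap\E[g]$ is exactly the set of inclusion-maximal members of $\E[g]$, and every $X\in\E[g]$ lies in such a maximal member by finiteness of $U$) is precisely that immediate reasoning spelled out. No gaps.
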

\begin{claim}\label{claim:BP02}
For any $X\in \F$ such that $X\not\in \E[g]$ and $g(X)\geq 2$,
there exist $X'\in \E[g]$ and $P\in \BP[g]\cap \E[g]$ such that
$X'\subseteq X\cap P$ and $g(X')\geq g(X)$.
\end{claim}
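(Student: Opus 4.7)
The plan is to first produce the set $X'$ by descending inside $X$ to a minimal large-value witness in $\F$, and then to obtain $P$ by applying Claim~\ref{claim:BP01} to that $X'$. The hypotheses $X\notin \E[g]$ and $g(X)\geq 2$ already guarantee that some proper subset of $X$ in $\F$ has value at least $g(X)$, so pushing this observation to a minimum should yield an effective set witnessing the claim.

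Concretely, I will consider
\[
\mathcal{A}:=\set{Y\in \F | Y\subseteq X,~ g(Y)\geq g(X)}
\]
and take $X'$ to be a minimal element of $\mathcal{A}$ with respect to inclusion. Such an element exists because $U$ is finite, and $\mathcal{A}$ is nonempty since $X\in \mathcal{A}$. Moreover, combining the definition of $\E[g]$ with $X\notin \E[g]$ and $g(X)\geq 2$ yields a proper subset of $X$ already in $\mathcal{A}$, so the minimum $X'$ must in fact be strictly smaller than $X$ (otherwise the very nonexistence of such a proper subset would force $X\in \E[g]$, a contradiction). The key verification is then that $X'\in \E[g]$: one has $g(X')\geq g(X)\geq 2$, and if some $Z\in \F$ satisfied $Z\subsetneq X'$ with $g(Z)\geq g(X')$, then $Z\subseteq X$ together with $g(Z)\geq g(X)$ would place $Z$ in $\mathcal{A}$, contradicting the minimality of $X'$.

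Once $X'\in \E[g]$ is in hand, Claim~\ref{claim:BP01} immediately supplies $P\in \BP[g]\cap \E[g]$ with $X'\subseteq P$; combined with $X'\subseteq X$ this gives $X'\subseteq X\cap P$, and $g(X')\geq g(X)$ holds by construction. The only slightly delicate point is the transfer from ``$X'$ minimal inside $X$'' to ``$X'$ minimal in the $\E[g]$ sense'', which rests on the automatic observation that any hypothetical smaller $\F$-witness $Z\subsetneq X'$ necessarily sits inside $X$; apart from that, the argument is essentially an unpacking of definitions and I do not anticipate a serious obstacle.
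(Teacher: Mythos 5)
Your proof is correct and follows essentially the same route as the paper: the paper takes $X'$ to be a minimal maximizer of $g$ among $\F$-subsets of $X$, which is a trivial variant of your minimal element of $\mathcal{A}$, verifies $X'\in\E[g]$ by the same minimality argument, and then invokes Claim~\ref{claim:BP01} to produce $P$. No gaps.
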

\begin{proof}
For such $X$, let $X'$ be a minimal maximizer of $g$ subject to  $X'\subseteq X$.
Then, $X\not\in \E[g]$ implies  $X'\subsetneq X$ and $g(X')\geq g(X)\geq 2$.
By definition, $X'$ belongs to $\E[g]$.
Then, by Claim~\ref{claim:BP01}, there is $P\in \BP[g]\cap \E[g]$ with $X'\subseteq P$. 
They satisfy $X'\subseteq X\cap P$ and $g(X')\geq g(X)$.
\end{proof}

We provide the following four properties of $\BP[g]$,
whose proofs are postponed to Section~\ref{sec:BP}.

\begin{proposition}\label{prop:BP}
The family $\BP[g]$ is a partition of $U$.
\finbox
\end{proposition}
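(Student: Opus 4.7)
My plan is to verify the two partition axioms—coverage and pairwise disjointness—for $\BP[g]$ separately, reducing everything to the single nontrivial statement that any two distinct maximal members of $\E[g]$ must be disjoint.

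Coverage is immediate from the construction together with Claim~\ref{claim:BP01}: if $u \notin \bigcup \E[g]$, then $\{u\} \in \BP[g]$ by definition; otherwise $u \in X$ for some $X \in \E[g]$, and Claim~\ref{claim:BP01} supplies a maximal $P \in \BP[g] \cap \E[g]$ with $X \subseteq P$, so $u \in P$. Disjointness is also immediate for two distinct singletons, and for a singleton $\{u\}$ (which appears in $\BP[g]$ only when $u \notin \bigcup \E[g]$) against any $P \in \BP[g] \cap \E[g]$. So the real content is to show that two distinct maximal $P_1, P_2 \in \E[g]$ cannot meet.

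I plan to prove this by contradiction. Suppose $P_1 \cap P_2 \neq \emptyset$; maximality in $\E[g]$ rules out $P_1 \subseteq P_2$ and $P_2 \subseteq P_1$, so $(P_1, P_2)$ is an intersecting pair. Set $W := P_1 \cup P_2$, let $k^* := \max\set{g(Y)|Y \in \F,~ Y \subseteq W}$, and let $Y^* \subseteq W$ with $Y^* \in \F$ be $\subseteq$-minimal subject to $g(Y^*) = k^*$; this choice forces $Y^* \in \E[g]$. Intersecting supermodularity applied to $(P_1, P_2)$, combined with the observation that $P_1 \cap P_2$ is a proper $\F$-subset of each $P_i \in \E[g]$ and therefore $g(P_1 \cap P_2) < g(P_i)$, yields $g(W) > \max\{g(P_1), g(P_2)\}$, so in particular $k^* > g(P_1)$ and $k^* > g(P_2)$. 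I then case-split on the position of $Y^*$ relative to $P_1$: (a) if $P_1 \subseteq Y^*$, then $P_1 \subsetneq Y^* \in \E[g]$, contradicting the maximality of $P_1$; (b) if $Y^* \cap P_1 = \emptyset$, then $Y^* \subseteq W \setminus P_1 \subseteq P_2$, which forces $k^* = g(Y^*) \leq g(P_2) < k^*$; (c) otherwise $(Y^*, P_1)$ is an intersecting pair, and a second application of supermodularity together with $g(Y^* \cap P_1) < g(P_1)$ (since $Y^* \cap P_1 \subsetneq P_1 \in \E[g]$) gives $g(Y^* \cup P_1) > g(Y^*) = k^*$ with $Y^* \cup P_1 \in \F$ and $Y^* \cup P_1 \subseteq W$, contradicting the choice of $k^*$.

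The main obstacle I anticipate is case (c), where a strict improvement must be extracted from an already-chosen maximum witness. The essential mechanism, used twice, is the defining property of $\E[g]$: a member of $\E[g]$ admits no proper $\F$-subset of equal or larger $g$-value. It is applied first to lift $g(W)$ strictly above $g(P_1)$ and $g(P_2)$, and then to push $g(Y^* \cup P_1)$ strictly past $k^*$. Once all three subcases are discharged we conclude $P_1 \cap P_2 = \emptyset$, completing the disjointness verification and hence the proposition.
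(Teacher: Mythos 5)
Your argument is essentially the paper's own, just reorganized: the paper isolates the core as Claim~\ref{claim:intersect} (for intersecting $X,Y\in\E[g]$, the union $X\cup Y$ lies in $\E[g]$ with $g(X\cup Y)>\max\{g(X),g(Y)\}$, proved via a minimal maximizer of $g$ over $\F$-subsets of $X\cup Y$ together with supermodularity and \eqref{eq:effec}), and Proposition~\ref{prop:BP} then follows by the same maximality contradiction. You inline that claim, replacing ``show the minimal maximizer equals $X\cup Y$'' by a direct contradiction with the maximality of $P_1$ (case (a)) or with the choice of $k^*$ (cases (b), (c)); the coverage and singleton parts are handled identically.

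There is one pinhole, in case (c): knowing only $Y^*\cap P_1\neq\emptyset$ (not (b)) and $P_1\not\subseteq Y^*$ (not (a)) does not yet make $(Y^*,P_1)$ an intersecting pair --- you must also show $Y^*\setminus P_1\neq\emptyset$, i.e.\ rule out $Y^*\subsetneq P_1$, since otherwise the supermodular inequality is applied to a pair for which it need not hold. This is exactly the step ``$Z\not\subseteq X$'' in the paper's proof of Claim~\ref{claim:intersect}, and it is immediate from your own setup: $g(Y^*)=k^*>g(P_1)$, while $P_1\in\E[g]$ admits no proper $\F$-subset of $g$-value at least $g(P_1)$ by \eqref{eq:effec}. (The same inequality $k^*>g(P_1)$ is what makes the inclusion in case (a) strict, which you use implicitly.) With that one line added, all three cases close and the proof is complete.
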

We call $\BP[g]$ the {\em bunch partition} of $U$ by $g$.
We denote by $P[g](u)$ the unique part containing $u\in U$.
Recall that $d[g]$ is defined by $d[g](u)=\max\{1, \max\set{g(X)|u\in X\in \E[g]}\}$.

\begin{proposition}\label{prop:BP2}
For every $u\in \bigcup \E[g]$, we have $P[g](u)\in \E[g]$ and $d[g](u)=g(P[g](u))\geq 2$.
For every $u\in U\setminus \bigcup \E[g]$, we have $P[g](u)=\{u\}$ and $d[g](u)=1$.
\finbox
\end{proposition}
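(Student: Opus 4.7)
The plan is to split the argument according to whether $u\in \bigcup \E[g]$ or not, using Proposition~\ref{prop:BP} (that $\BP[g]$ partitions $U$) in tandem with Claim~\ref{claim:BP01} to identify $P[g](u)$ explicitly in each case.

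For $u\in U\setminus \bigcup \E[g]$, the construction of $\BP[g]$ directly places $\{u\}$ in $\BP[g]$, so the partition property forces $P[g](u)=\{u\}$. Since the collection $\set{X|u\in X\in \E[g]}$ is empty for such $u$, the inner maximum in the definition of $d[g](u)$ evaluates to $-\infty$ by convention, leaving $d[g](u)=1$. This handles the easy half.

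For $u\in \bigcup \E[g]$, I would first pick any $X\in \E[g]$ containing $u$. Claim~\ref{claim:BP01} supplies some $P\in \BP[g]\cap\E[g]$ with $X\subseteq P$, and since $u\in P$ and $\BP[g]$ partitions $U$, this $P$ must be $P[g](u)$. In particular $P[g](u)\in \E[g]$, and the definition of $\E[g]$ then yields $g(P[g](u))\geq 2$.

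The remaining (and, I expect, the only nontrivial) step is the identity $d[g](u)=g(P[g](u))$. The inequality $d[g](u)\geq g(P[g](u))$ is immediate because $P[g](u)\in \E[g]$ contains $u$. For the reverse direction, let $X'\in \E[g]$ be any set containing $u$; applying Claim~\ref{claim:BP01} once more produces some $Q\in \BP[g]\cap\E[g]$ with $X'\subseteq Q$, and the partition property forces $Q=P[g](u)$. Now the defining condition of $\E[g]$ applied to $P[g](u)$ — namely, that no proper subset in $\F$ has $g$-value $\geq g(P[g](u))$ — delivers $g(X')\leq g(P[g](u))$, with equality when $X'=P[g](u)$ and strict otherwise. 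Taking the maximum over all such $X'$ and combining with $g(P[g](u))\geq 2>1$ gives the claim. The potential pitfall worth highlighting is that this last step implicitly relies on both $P[g](u)$ \emph{and} the competing set $X'$ lying in $\E[g]$ and hence in $\F$; without the partition property linking them through $\BP[g]$, one could not compare their $g$-values via the effectiveness condition alone.
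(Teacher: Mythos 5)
Your proposal is correct and follows essentially the same route as the paper: both identify $P[g](u)$ via Claim~\ref{claim:BP01} together with the partition property of Proposition~\ref{prop:BP}, then bound $g(X')$ for any $u\in X'\in \E[g]$ by the effectiveness condition \eqref{eq:effec} applied to $P[g](u)$. The only cosmetic difference is that the paper phrases the containment $X'\subseteq P[g](u)$ via pairwise disjointness of the maximal members of $\E[g]$, while you invoke Claim~\ref{claim:BP01} a second time and use uniqueness of the part containing $u$; these are the same argument.
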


A subset $K\subseteq U$ is called a {\em partial transversal} %(resp., {\em transversal}) 
of $\BP[g]$ if
$|K\cap P|\leq 1$ %(resp., $|K\cap P|=1$) 
for every $P\in \BP[g]$.
\begin{proposition}\label{prop:BP3}
Suppose $|Z|\geq g(Z)$ for every $Z\in \F$. 
Then, for any partial transversal $K$ of $\BP[g]$,
the reduction $g_{K}$ satisfies $|X|\geq g_{K}(X)$ for every $X\in \F_{K}$.
\finbox
\end{proposition}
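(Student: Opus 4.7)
The plan is to unfold the definition of the reduction and reduce everything to a pointwise statement about individual witnessing sets. Fix $X\in\F_{K}$; since $g_{K}(X)=\max\set{\hat{g}_{K}(Z)|Z\in \F,~Z\setminus K=X}$, it suffices to prove $|X|\geq \hat{g}_{K}(Z)$ for every $Z\in \F$ with $Z\setminus K=X$. If $Z\cap K=\emptyset$, then $X=Z$ and $\hat{g}_{K}(Z)=g(Z)$, so the inequality is just the hypothesis $|Z|\geq g(Z)$. Otherwise $\hat{g}_{K}(Z)=g(Z)-1$ and $|X|=|Z|-|Z\cap K|$, and what I must prove is
\[
g(Z)+|Z\cap K|\leq |Z|+1.
\]
When $g(Z)\leq 1$ this is automatic because $|Z\cap K|\leq |Z|$, so the real work lies in the case $g(Z)\geq 2$.

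In that case I would split on whether $Z\in \E[g]$. If so, Claim~\ref{claim:BP01} yields some $P\in \BP[g]\cap \E[g]$ with $Z\subseteq P$; since $K$ is a partial transversal of $\BP[g]$, $|Z\cap K|\leq |P\cap K|\leq 1$, and the inequality follows from $g(Z)\leq |Z|$. If $Z\notin \E[g]$, then Claim~\ref{claim:BP02} supplies $X'\in \E[g]$ and $P\in \BP[g]\cap \E[g]$ with $X'\subseteq Z\cap P$ and $g(X')\geq g(Z)$. Applying the hypothesis to $X'$ gives $g(Z)\leq g(X')\leq |X'|$, while the partial-transversal condition on $K$ gives $|X'\cap K|\leq |P\cap K|\leq 1$. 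Since $X'\subseteq Z$, decomposing $Z\cap K$ yields
\[
|Z\cap K|=|X'\cap K|+|(Z\setminus X')\cap K|\leq 1+(|Z|-|X'|),
\]
and combining this with $g(Z)\leq |X'|$ produces $g(Z)+|Z\cap K|\leq |Z|+1$, exactly as required.

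There is no genuine obstacle once the bunch-partition machinery is in place; the only delicate point is the case $g(Z)\geq 2$ with $Z\notin \E[g]$, where one must control both the size of $g(Z)$ and the number of elements of $K$ captured by $Z$. Claim~\ref{claim:BP02} handles both at once by trading $Z$ for a subset $X'$ that sits inside a single bunch while preserving the $g$-value, after which the partial-transversal hypothesis on $K$ forces $|X'\cap K|\leq 1$ and the rest is elementary counting.
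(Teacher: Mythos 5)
Your proposal is correct and follows essentially the same route as the paper: reduce to showing $|Z\setminus K|\geq \hat{g}_{K}(Z)$ for each witness $Z$, dispose of the cases $Z\cap K=\emptyset$ and $g(Z)\leq 1$ directly, and in the remaining case use Claims~\ref{claim:BP01} and \ref{claim:BP02} together with the partial-transversal bound $|\cdot\cap K|\leq 1$ inside a single bunch. The only difference is cosmetic arithmetic (you bound $|Z\cap K|$ from above, the paper bounds $|Z\setminus K|$ from below via $|Z'\setminus K|$), which does not change the argument.
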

\begin{proposition}\label{prop:BP4}
Suppose $|Z|\geq g(Z)$ for every $Z\in \F$ and 
take an arbitrary partial transversal $K$ of $\BP[g]$.
For every $u\in U\setminus K$, we see that
$P[g](u)\cap K=\emptyset$ implies $d[g_{K}](u)=d[g](u)$
and $P[g](u)\cap K\neq\emptyset$ implies  $d[g_{K}](u)<d[g](u)$.
\finbox
\end{proposition}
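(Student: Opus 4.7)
My plan is to establish both the upper bound $d[g_K](u)\le d[g](u)$ (strict when $P[g](u)\cap K\neq\emptyset$) and, in the non-strict case, the matching lower bound, via a single technical template. By Proposition~\ref{prop:BP2}, $d[g](u)=g(P[g](u))$ if $u\in\bigcup\E[g]$ and $d[g](u)=1$ otherwise, with the analogous formula for $d[g_K](u)$. Writing $P:=P[g](u)$, I split into three cases: (a)~$u\notin\bigcup\E[g]$; (b)~$u\in\bigcup\E[g]$ with $P\cap K=\emptyset$; (c)~$P\cap K\neq\emptyset$, which forces $u\in\bigcup\E[g]$. The targets are $d[g_K](u)=1$, $d[g_K](u)=g(P)$, and $d[g_K](u)\le g(P)-1$, respectively.

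For the upper bounds, suppose for contradiction that some $Y\in\E[g_K]$ has $u\in Y$ and $g_K(Y)\ge m$, where $m=2$ in (a), $m=g(P)+1$ in (b), and $m=g(P)$ in (c). Pick a witness $Z\in\F$ with $Z\setminus K=Y$ and $\hat g_K(Z)=g_K(Y)$, so $g(Z)\ge m+\epsilon$, with $\epsilon=0$ if $Z\cap K=\emptyset$ and $\epsilon=1$ otherwise. In cases (b) and (c), $Z\subseteq P$ is immediately excluded by $P\in\E[g]$ together with the lower bound on $g(Z)$ and, when $Z=P$, the $K$-intersection parity. In the remaining ``properly intersecting'' configuration, $\F$ being an intersecting family puts $Z\cap P,Z\cup P\in\F$, and since $Z\cap P\subsetneq P$ with $P\in\E[g]$ forces $g(Z\cap P)<g(P)$, the supermodular inequality $g(Z)+g(P)\le g(Z\cup P)+g(Z\cap P)$ yields $g(Z\cup P)>g(Z)$. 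Either way I obtain $\tilde Z\in\F$ with $P\subsetneq\tilde Z$ and $g(\tilde Z)\ge g(Z)$ (take $\tilde Z=Z$ if $P\subsetneq Z$, else $\tilde Z=Z\cup P$); in case (a) I simply set $\tilde Z=Z$ and skip this reduction.

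Now by maximality of $P$ in $\E[g]$ (resp.\ by $u\notin\bigcup\E[g]$ in (a)), $\tilde Z\notin\E[g]$, so Claim~\ref{claim:BP02} supplies a minimal $W\in\E[g]$ with $W\subseteq\tilde Z$ and $g(W)\ge g(\tilde Z)$. The crux is $u\notin W$ and $W\cap P=\emptyset$: if $u\in W\in\E[g]$ this contradicts $u\notin\bigcup\E[g]$ in case (a), and in (b)/(c) the partition property of $\BP[g]$ together with maximality of $P$ forces $W\subseteq P$, contradicting $g(W)\ge g(\tilde Z)>g(P)$; the same reasoning excludes $W\cap P\neq\emptyset$. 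Hence $W\subseteq\tilde Z\setminus P\subseteq Z$. Setting $W':=W\setminus K$, one has $W'\in\F_K$ and $W'\subsetneq Y$ because $u\in Y\setminus W'$. A short case check on $W\cap K$ gives $\hat g_K(W)\ge g(Z)-\epsilon$ (if $W\cap K=\emptyset$, $\hat g_K(W)=g(W)\ge g(Z)\ge g(Z)-\epsilon$; if $W\cap K\neq\emptyset$, then $W\subseteq Z$ forces $\epsilon=1$ and $\hat g_K(W)=g(W)-1\ge g(Z)-1=g(Z)-\epsilon$). Therefore $g_K(W')\ge\hat g_K(W)\ge g(Z)-\epsilon=g_K(Y)$, contradicting $Y\in\E[g_K]$. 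The degenerate case $W\subseteq K$ is excluded by the partial-transversal property: $W\subseteq P^*\in\BP[g]$ and $|K\cap P^*|\le 1$ force $|W|\le 1$, conflicting with $|W|\ge g(W)\ge 2$.

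For the matching lower bound in case (b), the same template shows $P\in\E[g_K]$: any $Y'\subsetneq P$ in $\F_K$ with $g_K(Y')\ge g_K(P)\ge g(P)$ would yield, via its own witness $Z'$, exactly the same kind of contradiction, certifying $d[g_K](u)\ge g_K(P)\ge g(P)=d[g](u)$. The conceptually hardest step is the promotion from the properly intersecting configuration of $Z,P$ to the clean $P\subsetneq\tilde Z$ one: the strict inequality $g(Z\cap P)<g(P)$ powering the supermodular gain depends essentially on $P\in\E[g]$ and not merely on $P\in\F$. The remaining bookkeeping is keeping $u\in Y\setminus W'$ throughout (which is how $W'\subsetneq Y$ is secured) and combining the $\BP[g]$-partition with $|K\cap P^*|\le 1$ to rule out $W\subseteq K$.
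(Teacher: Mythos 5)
Your proof is correct, but it is organized differently from the paper's. The paper factors Proposition~\ref{prop:BP4} through standalone structural claims about the reduction: Claim~\ref{claim:refine0} (every member of $\E[g_{K}]$ sits inside some $P\setminus K$ with $P\in\BP[g]\cap\E[g]$), Claim~\ref{claim:ZB} (the witness inequality $\hat{g}_{K}(Z)\leq \hat{g}_{K}(P)$, strict in the relevant situation), and then Claims~\ref{claim:ZB2} and~\ref{claim:ZB3}, after which the proposition is read off from Proposition~\ref{prop:BP2} applied to $g_{K}$ together with the refinement property (Claim~\ref{claim:refine1}). You instead argue directly from the definition of $d[g_{K}]$: you bound $g_{K}(Y)$ for every $Y\in\E[g_{K}]$ containing $u$, and separately certify $P\in\E[g_{K}]$ when $P\cap K=\emptyset$. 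The toolkit is the same (Claims~\ref{claim:BP01} and~\ref{claim:BP02}, the partition property, the partial-transversal bound, and $|Z|\geq g(Z)$), but your engine has an extra uncrossing step: you replace the witness $Z$ by $\tilde{Z}=Z\cup P$ using supermodularity and $g(Z\cap P)<g(P)$ (which indeed relies on $P\in\E[g]$, as you note), whereas the paper's Claim~\ref{claim:ZB} applies Claim~\ref{claim:BP02} to the witness itself and locates the resulting set inside $P$ via the nonemptiness of $Z'\setminus K$. The paper's route produces reusable facts ($g_{K}(P)=g(P)$ and $P\in\BP[g_{K}]$, the refinement of partitions); yours is more self-contained and settles the proposition in one sweep.

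Two spots deserve tightening, though neither is a fatal gap. First, in case (c) with $P\subsetneq Z$ you invoke the strict inequality $g(\tilde{Z})>g(P)$; it holds, but only because $P\subseteq Z$ and $P\cap K\neq\emptyset$ force $Z\cap K\neq\emptyset$, i.e.\ $\epsilon=1$ and $g(Z)\geq g(P)+1$ --- this should be said, since with mere $g(W)\geq g(P)$ the possibility $W=P$ survives and then $u\in W$ ruins the step $W'\subsetneq Y$. Second, the case (b) lower bound is not literally ``the same contradiction'': a proper subset $Y'\subsetneq P$ in $\F_{K}$ is not assumed to lie in $\E[g_{K}]$, so you cannot finish by violating its $\E[g_{K}]$-membership. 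Running your template on its witness $Z'$, one gets $W\subseteq \tilde{Z}\setminus P$ while $\tilde{Z}\setminus K=Y'\cup P=P$, hence $W\subseteq K$; the contradiction is then precisely your degenerate case ($|W|\geq g(W)\geq 2$ versus $|W|=|W\cap K|\leq 1$), with the remaining subcases $Z'\subseteq P$ excluded by $P\in\E[g]$ and $Y'=\emptyset$ handled by the same degenerate argument. So the machinery you set up does suffice, but the endgame differs from the upper-bound template and should be written out.
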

To capture the notion of bunch partitions, we provide an example.
\begin{figure}[htbp]
\begin{center}
   \includegraphics[width=140mm]{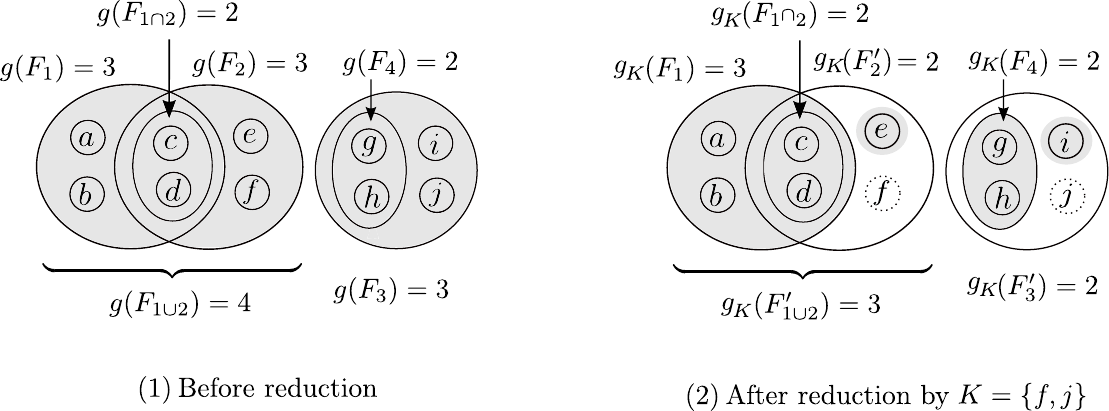}
\caption{
\small\baselineskip=10pt 
(1) The family $\F$ in Example~\ref{ex:example1} and values of $g$. 
(2) The family $\F_{K}$ and values of $g_{K}$, where $F'$ denotes $F\setminus K$ for each $F\in \F$.
The families $\BP[k]$ and $\BP[g_{K}]$ are represented by gray color.}
\label{fig1}
\end{center}
\end{figure}

\begin{example}\label{ex:example1}
Let $U=\{a,b,c,d,e,f,g,h,i,j\}$
and $\F=\{F_{1}, F_{2}, F_{1\cup 2}, F_{1\cap 2}, F_{3}, F_{4}\}$, where
$F_{1}=\{a,b,c,d\}$,
$F_{2}=\{c,d,e,f\}$,
$F_{1\cup 2}=F_{1}\cup F_{2}$, 
$F_{1\cap 2}=F_{1}\cap F_{2}$,
$F_{3}=\{g,h,i,j\}$,
$F_{4}=\{g,h\}$.
Define $g:\F\to \Z$ by 
$g(F_{1})=g(F_{2})=g(F_{3})=3$, 
$g(F_{1\cup 2})=4$, 
$g(F_{1\cap 2})=g(F_{4})=2$ (see Figure~\ref{fig1}).
Then,  $\E[g]=\F$, $\BP[g]=\{F_{1\cup 2}, F_{3}\}$.
Also, $d[g](u)=4$ for each $u\in F_{1\cup 2}$ and  
$d[g](u)=3$ for each $u\in F_{3}$.

Let $K=\{f,j\}$ and 
$g_{K}$ be the reduction of $g$ by $K$.
%Then, 
%$g_{K}(F_{2}\setminus K)=2$, 
%$g_{K}(F_{1\cup 2}\setminus K)=3$,
%$g_{K}(F_{3}\setminus K)=2$, 
%and $F\setminus K$, $g_{K}(F)=g(F)$ for other $F\in \F$.
Then, $\E[g_{K}]=\{F_{1}, F_{1\cap 2},F_{4}\}$ and $\BP[g]=\{F_{1}, \{e\},F_{4},\{i\}\}$,
Also, $d[g](u)=3$ for each $u\in F_{1}$,  
$d[g](u)=2$ for each $u\in F_{4}$, and $d[g](e)=d[g](i)=1$.
\end{example}

\begin{remark}
The definition of a bunch partition is 
similar to that of a {\em solid partition} studied by B\'{a}r\'{a}sz, Becker, and Frank \cite{BBF05}. 
For a directed graph,  a vertex set $X$ is called {\em in-solid}  
if there is no nonempty proper subset $X'$ of $X$ satisfying $\varrho(X')\leq \varrho(X)$,
where $\varrho(X)$ is the in-degree, i.e., the number of edges entering $X$.
Using the submodularity of $\varrho$, it was shown in \cite{BBF05} that
maximal in-solid sets form a partition.
(The same is true for {\em out-solid} sets, which are defined analogously by the out-degrees.)
Proposition~\ref{prop:BP} will be shown by similar arguments (in Section~\ref{sec:BP}). 
\end{remark}

\section{Proof of the Key Lemma}\label{sec:KeyLem}
This section proves Lemma~\ref{lem:key} relying on Propositions~\ref{prop:BP}--\ref{prop:BP4}.
(In fact, we prove a stronger version of Lemma~\ref{lem:key}.)
First, we introduce the following fact on bipartite graphs,
which was used by Borodin et al. \cite{BKW97} to show Theorem~\ref{thm:BKW}.
%We will use it in a more elaborated manner.
For completeness, we provide their proof.
\begin{proposition}[Borodin et al. {\cite[Lemma 3.1]{BKW97}}]\label{prop:BKW}
	For a bipartite graph $G=(S,T;E)$, if $|S|\geq |T|$ and $S$ contains no isolated vertices, 
	then $G$ has a nonempty matching $M\subseteq E$ such that
	every $e=st\in E$ with  $s\in \partial M$ satisfies $t\in \partial M$,
	where $\partial M\subseteq S\cup T$ is the set of vertices incident to some edge in $M$.
\end{proposition}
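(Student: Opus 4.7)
The plan is to choose a maximum matching $M^{*}$ of $G$ and then extract $M$ via a König-type alternating path argument that isolates a ``closed'' subgraph whose $S$-side has all of its $T$-neighbors inside the subgraph. The target condition $N(\partial M\cap S)\subseteq \partial M\cap T$ is exactly a statement about such closure, so the natural source of $M$ is the restriction of a maximum matching to the region reachable from unmatched $S$-vertices.

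If $M^{*}$ already saturates $S$, then $|M^{*}|=|S|\ge |T|\ge |M^{*}|$, so $|S|=|T|$ and $M^{*}$ also saturates $T$; hence $M:=M^{*}$ trivially satisfies the condition (every $T$-vertex lies in $\partial M$), and it is nonempty because $S$ is nonempty with no isolated vertex.

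Otherwise let $S_{0}:=S\setminus \partial M^{*}$, which is nonempty, and let $A\subseteq S$ and $B\subseteq T$ be the sets of vertices reachable from $S_{0}$ along $M^{*}$-alternating paths (leaving $S$ via non-matching edges and returning to $S$ via matching edges). I then expect the following standard consequences of the maximality of $M^{*}$: (a) every $T$-neighbor of a vertex in $A$ lies in $B$, since otherwise an alternating path could be extended along a non-matching edge; (b) every vertex of $B$ is $M^{*}$-matched to a vertex of $A\setminus S_{0}$, since otherwise the alternating path from $S_{0}$ to such a vertex would be augmenting, contradicting maximality; and (c) $B$ is nonempty, because any $s\in S_{0}$ has at least one $T$-neighbor by the no-isolated-vertex hypothesis, and that neighbor automatically lies in $B$. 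With (a)--(c) in hand, I take $M$ to be the restriction of $M^{*}$ to edges between $A\setminus S_{0}$ and $B$; then (b) yields $|M|=|B|\ge 1$, (a) and (b) yield $\partial M\cap S=A\setminus S_{0}$ and $\partial M\cap T=B$, and (a) shows that every $T$-neighbor of any $s\in \partial M\cap S$ lies in $B=\partial M\cap T$, which is what the proposition requires.

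The main work is the nontrivial case, and within it the main technical step is the careful verification of (a)--(c) via alternating paths --- combining the maximality of $M^{*}$ with the no-isolated-vertex hypothesis (to force $B\ne\emptyset$) and with $|S|\ge |T|$ (to dispose of the saturated case). Once these König-style facts are established, the matching $M$ essentially assembles itself from the structure already present in $M^{*}$.
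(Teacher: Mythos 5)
Your proof is correct, but it follows a genuinely different route from the paper's. The paper argues via Hall's condition on a minimal deficient set: since $|S|\ge|T|$ gives $|\Gamma(S)|\le|S|$, one takes a \emph{minimal} nonempty $V\subseteq S$ with $|\Gamma(V)|\le|V|$; minimality forces $|\Gamma(V)|=|V|$ and strict Hall inequalities inside $V$, so the K\H{o}nig--Hall theorem yields a matching $M$ covering $V$ with $\partial M\cap T=\Gamma(V)$, and the closure property is immediate. You instead start from a maximum matching $M^{*}$ and use the standard alternating-path (K\H{o}nig-type) decomposition: in the case where $M^{*}$ saturates $S$, the hypothesis $|S|\ge|T|$ forces $|S|=|T|$ and $T$ saturated, so $M^{*}$ itself works; otherwise the reachable sets $A,B$ from the unmatched vertices $S_{0}$ satisfy your (a)--(c) (all of which are the standard consequences of maximality plus the no-isolated-vertex hypothesis), and restricting $M^{*}$ to the edges between $A\setminus S_{0}$ and $B$ gives $\partial M\cap S=A\setminus S_{0}$, $\partial M\cap T=B\supseteq\Gamma(A)$, hence the closure property, with $|M|=|B|\ge1$. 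Both arguments use the two hypotheses in the analogous places ($|S|\ge|T|$ to rule out the ``bad'' saturated/deficiency-free situation, no isolated vertices to make the output nonempty). What the paper's approach buys is brevity: a two-line reduction to Hall's theorem with no case split and no bookkeeping about which vertices are matched where. What yours buys is that it is essentially self-contained modulo augmenting-path basics (Hall's theorem need not be invoked as a black box) and it exhibits $M$ explicitly inside a maximum matching, at the cost of the case distinction and the verification of (a)--(c). One cosmetic remark common to both proofs: the statement implicitly assumes $S\neq\emptyset$ (equivalently $E\neq\emptyset$), which is guaranteed in the application in Claim 4.2.
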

\begin{proof}[Proof \cite{BKW97}]
Since $|T|\leq |S|$, 
we have $|\Gamma(S)|\leq |T|\leq |S|$, where $\Gamma(S)$ denotes the set of neighbors of vertices in $S$.
Let $V$ be a minimal nonempty subset of $S$ such that $|\Gamma(V)|\leq |V|$. 
Then $|\Gamma(V)|=|V|$ and there is a matching $M$ with $\partial M\cap S=V$. 
(If $|V|=1$, this holds because $S$ contains no isolated vertices. 
If $|V|\geq 2$, the minimality of $V$ implies $|\Gamma(V)|=|V|$ and $|\Gamma(W)|>|W|$ whenever $\emptyset \subsetneq W\subsetneq V$,
and hence the K\H{o}nig-Hall theorem implies that there is a matching $M$ with $\partial M\cap S=V$.) 
By $\partial M\cap T\subseteq \Gamma(V)$ and $|\partial M\cap T|=|\partial M\cap S|=|V|=|\Gamma(V)|$, we obtain 
$\partial M\cap T=\Gamma(V)$. We see that this $M$ has the required property. 
\end{proof}

For the pair of partitions of the same ground set, 
Proposition~\ref{prop:BKW} is rephrased as follows.
%\vspace{-1mm}
%\footnote{The proof of Claim~\ref{claim:matching} only uses the fact that $\BP[g_{1}]$ and $\BP[g_{2}]$ are partitions.} 
\begin{claim}\label{claim:matching}
Let $g_{1}:\F_{1}\to \Z$ and $g_{2}:\F_{2}\to \Z$ be intersecting-supermodular functions.
For the bunch partitions $\BP[g_{1}]$ and $\BP[g_{2}]$, there exists a nonempty common partial transversal $K\subseteq U$
satisfying one or both of the following. 
%\vspace{-1mm}
\begin{itemize}
	\setlength{\itemsep}{0mm}
\item[\rm (a)] Every $u\in U$ with $P[g_{1}](u)\cap K\neq \emptyset$ satisfies $P[g_{2}](u)\cap K\neq \emptyset$.
\item[\rm (b)] Every $u\in U$ with $P[g_{2}](u)\cap K\neq \emptyset$ satisfies $P[g_{1}](u)\cap K\neq \emptyset$.
%The statement of {\rm (a)} holds with $\BP[g_{1}]$ replaced by $\BP[g_{2}]$.
%where $(g_{i})_{K}$ is the reduction of $g_{i}$ by $K$ for each $i\in\{1,2\}$.
\end{itemize}
\end{claim}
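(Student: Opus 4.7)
The plan is to reformulate the claim as a matching problem on an auxiliary bipartite multigraph and apply Proposition~\ref{prop:BKW} directly. First I would construct a bipartite multigraph $H=(\BP[g_1],\BP[g_2];E_H)$ whose two color classes are (labeled disjoint copies of) the two bunch partitions, and whose edge set is $E_H=\set{e_u|u\in U}$, where $e_u$ joins the vertices $P[g_1](u)$ and $P[g_2](u)$. By Proposition~\ref{prop:BP} each $\BP[g_i]$ is a genuine partition of $U$, so $u\mapsto e_u$ is a well-defined bijection between $U$ and $E_H$. The key observation will be that a matching $M\subseteq E_H$ corresponds, via $K:=\set{u\in U|e_u\in M}$, exactly to a common partial transversal of $\BP[g_1]$ and $\BP[g_2]$, and $M\neq\emptyset$ iff $K\neq\emptyset$.

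With that setup in place, the argument runs as follows. Since the claim is used only when $U\neq\emptyset$, each part of each $\BP[g_i]$ is nonempty and therefore contributes at least one edge to $H$, so no vertex of $H$ is isolated on either side. Swapping the roles of $g_1$ and $g_2$ if necessary, I assume $|\BP[g_1]|\geq|\BP[g_2]|$ and apply Proposition~\ref{prop:BKW} with $S=\BP[g_1]$ and $T=\BP[g_2]$. This produces a nonempty matching $M\subseteq E_H$ such that every edge whose $\BP[g_1]$-endpoint lies in $\partial M$ also has its $\BP[g_2]$-endpoint in $\partial M$. The corresponding $K$ is then a nonempty common partial transversal, and I will verify that it satisfies~(a); the opposite cardinality case is symmetric and yields~(b).

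For the verification of (a), take $u\in U$ with $P[g_1](u)\cap K\neq\emptyset$ and pick $v\in P[g_1](u)\cap K$; then $P[g_1](v)=P[g_1](u)$ and $e_v\in M$, so $P[g_1](u)\in\partial M$. The edge $e_u$ is incident to this vertex, so the property of $M$ forces its other endpoint $P[g_2](u)$ into $\partial M$ as well, giving some $w\in K$ with $P[g_2](w)=P[g_2](u)$, i.e., $w\in P[g_2](u)\cap K$. I do not expect a real obstacle: the proof is essentially a dictionary between the partial-transversal language used here and the matching language of Proposition~\ref{prop:BKW}. The one point that warrants a little care, rather than a genuine difficulty, is treating $H$ as a \emph{multi}graph, since parallel edges arise when several $u\in U$ share the same pair of bunches; this is already the paper's default convention on bipartite graphs, and Proposition~\ref{prop:BKW} (with the given proof) applies verbatim.
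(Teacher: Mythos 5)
Your proposal is correct and is essentially the paper's own proof: the same auxiliary bipartite multigraph with the parts of $\BP[g_{1}]$ and $\BP[g_{2}]$ as vertices and the elements of $U$ as edges, followed by an application of Proposition~\ref{prop:BKW} on the larger side and the translation of the resulting matching into a nonempty common partial transversal satisfying (a) (or (b) in the symmetric case). Your explicit verification of (a) just fills in a step the paper states more briefly.
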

\begin{proof}
Let %$q_{i}=|\BP[g_{i}]|$ for $i=1,2$ and 
us denote the bunch partitions $\BP[g_{1}]$ and $\BP[g_{2}]$ by
\[\BP[g_{1}]=\{P_{1}^{1}, P_{2}^{1},\dots, P_{q_{1}}^{1}\},~~~~ \BP[g_{2}]=\{P_{1}^{2}, P_{2}^{2},\dots, P_{q_{2}}^{2}\}.\]
Let $G'=(S',T'; E')$ be a bipartite graph such that each vertex corresponds to 
a part of partitions and each edge corresponds to an element.
That is, we define
\begin{align*}
& S'=\set{s_{j}|j=1,2,\dots,q_{1}},\quad T'=\set{t_{k}|k=1,2,\dots, q_{2}},\\
& E'=\set{e_{u}=s_{j}t_{k}| u\in U, ~P[g_{1}](u)=P_{j}^{1}, ~P[g_{2}](u)=P_{k}^{2}}.
\end{align*}
There is a one-to-one correspondence between $U$ and $E'$.
Clearly, $G'$ has no isolated vertex.

We first consider the case $|S'|\geq |T'|$.
In this case,  apply Proposition~\ref{prop:BKW} with $S=S'$ and $T=T'$.
Then, there is a nonempty matching $M\subseteq E'$
such that every $e_{u}=s_{j}t_{k}\in E'$ with $s_{j}\in \partial M$ satisfies $t_{k}\in \partial M$.
Let $K:=\set{u\in U|e_{u}\in M}$.
As $M$ is a nonempty matching, $K$ is a nonempty common partial transversal of $\BP[g_{1}]$ and $\BP[g_{2}]$.
Also, the condition of $M$ means that every $u\in U$ with
$P[g_{1}](u)\cap K\neq \emptyset$ satisfies $P[g_{2}](u)\cap K\neq \emptyset$.
Thus, (a) holds.

In the case $|S'|<|T'|$, 
by applying Proposition~\ref{prop:BKW} with $S=T'$ and $T=S'$,
we can similarly obtain 
a nonempty common partial transversal $K$ satisfying (b).
\end{proof}

We are now ready to prove Lemma~\ref{lem:key}.
Actually, we show the following stronger statement, 
in which an additional constraint (iii) is also required for the functions $\pi_{1}$ and $\pi_{2}$. 
It is clear that Lemma~\ref{lem:key2} implies Lemma~\ref{lem:key}. 
\begin{lemma}[\bf Stronger Version of Lemma \ref{lem:key}]\label{lem:key2}
	For any
	intersecting-supermodular functions 
	$g_{1}:\F_{1}\to \Z$ and $g_{2}:\F_{2}\to \Z$ 
	such that $|X|\geq g_{i}(X)$ for every $i\in\{1,2\}$ and $X\in \F_{i}$,
	there exist functions $\pi_{1}, \pi_{2}:U\to \N$ satisfying the following {\rm (i)}, {\rm (ii)}, and {\rm (iii)}. 
	\begin{enumerate}
		\setlength{\itemsep}{0mm}
		\item[\rm (i)] For every $u\in U$, we have~ $\pi_{1}(u)+\pi_{2}(u)-1\leq \max\{d[g_{1}](u), ~d[g_{2}](u)\}$. 
		\item[\rm (ii)] For each $i\in\{1,2\}$,  $\pi_{i}$ dominates $g_{i}$.  
		\item[\rm (iii)] For each $i\in \{1,2\}$ and $u\in U$, we have	$\pi_{i}(u)\leq d[g_{i}](u)$.
	\end{enumerate}
\end{lemma}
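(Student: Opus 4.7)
The plan is to prove Lemma~\ref{lem:key2} by induction on $|U|$, the base case $U=\emptyset$ being trivial. For the inductive step, I apply Claim~\ref{claim:matching} to obtain a nonempty common partial transversal $K$ of $\BP[g_1]$ and $\BP[g_2]$ satisfying condition (a) or (b); by the symmetry of the statement in $g_1$ and $g_2$, assume that (a) holds. Claim~\ref{claim:reduction} and Proposition~\ref{prop:BP3} guarantee that $(g_1)_K$ and $(g_2)_K$ on $U\setminus K$ are intersecting-supermodular and still satisfy $|Y|\geq(g_i)_K(Y)$, so the induction hypothesis produces $\pi_1',\pi_2':U\setminus K\to\N$ satisfying (i)--(iii) for the reduced pair.

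To extend $\pi_1,\pi_2$ to $U$, I partition $U\setminus K$ into three pieces according to which bunch partitions are touched by $K$: $V_A=\{v\in U\setminus K : P[g_1](v)\cap K=\emptyset=P[g_2](v)\cap K\}$, $V_B=\{v\in U\setminus K : P[g_1](v)\cap K\neq\emptyset\neq P[g_2](v)\cap K\}$, and $V_C=\{v\in U\setminus K : P[g_1](v)\cap K=\emptyset,\ P[g_2](v)\cap K\neq\emptyset\}$; condition (a) rules out the remaining class. Then I define $\pi_1,\pi_2:U\to\N$ as follows: for $u\in K$, put $\pi_1(u):=1$ and $\pi_2(u):=d[g_2](u)$; for $v\in V_B$, put $\pi_1(v):=\pi_1'(v)+1$ and $\pi_2(v):=\pi_2'(v)$; for $v\in V_A\cup V_C$, put $\pi_1(v):=\pi_1'(v)$ and $\pi_2(v):=\pi_2'(v)$. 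The guiding intuition is that dominating $g_2$ on a set meeting $K$ essentially forces $\pi_2(u)=d[g_2](u)$ for $u\in K$, since by Proposition~\ref{prop:BP4} every $\pi_2'(v)$ on the rest of $u$'s $g_2$-bunch is at most $d[g_2](u)-1$; condition~(i) then forces $\pi_1(u)=1$, and the uniform $+1$-shift of $\pi_1'$ on $V_B$ turns this $1$ into a fresh value whenever needed.

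Conditions (iii) and (i) follow directly from Proposition~\ref{prop:BP4}: on $V_B$ one has $d[(g_i)_K](v)\leq d[g_i](v)-1$ for both $i$, so the $\pi_1$-shift preserves (iii), and the bound $\max\{d[(g_1)_K](v),d[(g_2)_K](v)\}\leq\max\{d[g_1](v),d[g_2](v)\}-1$ absorbs the extra $+1$ in (i); on $V_A\cup V_C$ the inductive bounds descend directly; and at $u\in K$ one has $\pi_1(u)+\pi_2(u)-1=d[g_2](u)\leq\max\{d[g_1](u),d[g_2](u)\}$. For (ii), I use Claims~\ref{claim:BP01} and~\ref{claim:BP02} to reduce to $Z\in\E[g_i]$ contained in a single bunch $P\in\BP[g_i]\cap\E[g_i]$, so that $|Z\cap K|\leq 1$. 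When $Z\cap K=\emptyset$, the shift either acts on all of $Z$ (if $P\cap K\neq\emptyset$, so $Z\subseteq V_B$) or on none of it, preserving cardinality; hence $|\pi_i(Z)|=|\pi_i'(Z)|\geq(g_i)_K(Z)\geq g_i(Z)$. When $Z\cap K=\{u\}$, the value at $u$ is a fresh color: for $i=1$, condition (a) makes $Z\setminus\{u\}\subseteq V_B$, so $\pi_1(Z\setminus\{u\})\subseteq\{2,3,\dots\}$ is disjoint from $\pi_1(u)=1$; for $i=2$, Proposition~\ref{prop:BP4} gives $\pi_2'(v)\leq d[(g_2)_K](v)<d[g_2](u)=\pi_2(u)$ for every $v\in Z\setminus\{u\}$. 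In either case $|\pi_i(Z)|=1+|\pi_i'(Z\setminus K)|\geq 1+(g_i(Z)-1)=g_i(Z)$. The hardest piece, and the reason the shift is needed, is the coordination at $u\in K$: condition~(a) of Claim~\ref{claim:matching} is exactly what places the entire $g_1$-bunch of $u$ inside $V_B$, which in turn legalizes the $\pi_1$-shift that lets $\pi_1(u)=1$ serve as a new color without violating $g_1$-domination.
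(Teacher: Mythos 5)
Your proposal is correct and follows essentially the same route as the paper's proof: the same induction on $|U|$ via Claim~\ref{claim:matching}, the identical definition of $\pi_1,\pi_2$ (your $V_B$ is exactly the set $\{v\in U\setminus K: P[g_1](v)\cap K\neq\emptyset\}$ under condition (a)), and the same verification of (ii) using Claims~\ref{claim:BP01}--\ref{claim:BP02}, \eqref{eq:gK01}--\eqref{eq:gK02}, and the inductive condition (iii) to produce the fresh color $d[g_2](u)=g_2(\hat P)$ at $u\in K$. The only omission is the routine case $g_i(Z)\leq 1$ before invoking the effective-set reduction, which is immediate from $|Z|\geq g_i(Z)$.
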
 
%
%\setstretch{1.1}

\begin{proof}[Proof of Lemma~\ref{lem:key2} (and hence of Lemma~\ref{lem:key})]
We use induction on $|U|$, i.e., the size of the ground set. 
Recall that  $d[g_{i}](u) =\max\{1, \max\set{g_{i}(Z)|u\in Z\in \E[g_{i}]}\}$.
\medskip

First, consider the case $|U|=1$, i.e., $U=\{u\}$. 
Since $g_{i}(Z)\leq |Z|\leq 1$
for every $i\in \{1,2\}$ and $X\in \F_{i}$,
we have $d[g_{1}](u)=d[g_{2}](u)=1$. 
Let $\pi_{1}(u)=\pi_{2}(u)=1$. 
Then, we can observe that (i), (ii), and (iii) are all satisfied.
%Then, $\pi_{1}(u)+\pi_{2}(u)-1=1$. 
%Also, clearly $\pi_{i}$ dominates $g_{i}$ for $i=1,2$.
\medskip

We now consider the case $|U|>1$. 
By Claim~\ref{claim:matching}, there is a nonempty common partial transversal $K$ 
of $\BP[g_{1}]$ and $\BP[g_{2}]$ satisfying (a) or (b).
For each $i\in \{1,2\}$, we denote by $g_{i}'$ the reduction of $g_{i}$ by $K$.
The domain of $g'_{i}$ is denoted by $\F'_{i}=\set{Z\setminus K|Z\in \F_{i}}$.
%Also, define $f':U\setminus K\to \N$ by $f'(u)=\max\{d[g'_{1}](u), ~d[g'_{2}](u)\}$.
By Claim~\ref{claim:reduction} and Proposition~\ref{prop:BP3},
$g'_{1}$ and $g'_{2}$ are intersecting-supermodular functions on $U\setminus K$ 
satisfying $|X|\geq g'_{i}(X)$ for every $i\in \{1,2\}$ and $X\in \F'_{i}$.
Since $|U\setminus K|<|U|$, the inductive assumption implies that there exist
$\pi'_{1}, \pi'_{2}:U\setminus K\to \N$ such that (i), (ii), and (iii) hold with
$(U\setminus K, \pi'_{1}, \pi'_{2}, g'_{1}, g'_{2})$ in place of
$(U, \pi_{1}, \pi_{2}, g_{1}, g_{2})$. 
By (i) and (iii), for every $u\in U\setminus K$, we have
\begin{align}
& \pi'_{1}(u)+\pi'_{2}(u)-1\leq \max\{d[g'_{1}](u), ~d[g'_{2}](u)\},\label{eq:induction1}\\
& \pi'_{i}(u)\leq d[g'_{i}](u)~~~(i\in \{1,2\}).\label{eq:induction3}
\end{align}
By (ii), for each $i\in \{1,2\}$, we have
\vspace{-3mm}
\begin{align}
& |\pi'_{i}(X)|\geq g'_{i}(X)~~~~(X\in \F'_{i}).\label{eq:induction2}
\end{align}
By the definition of the reduction $g'_{i}$, for each  $i\in \{1,2\}$, we have
\begin{align}
 g'_{i}(Z)&\geq g_{i}(Z) \text{~~~~~~~~~} (Z\in \F_{i}, ~Z\cap K=\emptyset), \label{eq:gK01}\\
 g'_{i}(Z\setminus K)&\geq g_{i}(Z)-1\text{~~~~} (Z\in \F_{i}, ~Z\cap K\neq\emptyset).\label{eq:gK02}
\end{align}
Also, since $K$ is a common partial transversal, 
Proposition~\ref{prop:BP4} implies that,
for every $i\in \{1,2\}$ and $u\in U\setminus K$, we have  
\vspace{-3mm}
\begin{align}
d[g'_{i}](u)&\leq d[g_{i}](u),\label{eq:reduction1}\\
P[g_{i}](u)\cap K\neq\emptyset &\implies d[g'_{i}](u)<d[g_{i}](u).\label{eq:reduction2}
\end{align}
Recall that $K$ satisfies (a) or (b).
We now show the case in which (a) holds (the case for (b) is shown analogously).  
Then, for every $u\in U$, we have
\begin{equation}
P[g_{1}](u)\cap K\neq \emptyset \implies P[g_{2}](u)\cap K\neq \emptyset.\label{eq:bipartite}
\end{equation}
Using the functions $\pi'_{1}, \pi'_{2}:U\setminus K\to \N$, we define $\pi_{1}, \pi_{2}:U\to \N$ by
\begin{align*}
&\pi_{1}(u)=
\begin{cases}
1& (u\in K),\\
\pi'_{1}(u)&  (u\in U\setminus K,~P[g_{1}](u)\cap K=\emptyset),\\
\pi'_{1}(u)+1&    (u\in U\setminus K,~P[g_{1}](u)\cap K\neq\emptyset),
\end{cases}\\
~~~\\
&\pi_{2}(u)=
\begin{cases}
d[g_{2}](u)& (u\in K),\\
\pi'_{2}(u)&  (u\in U\setminus K).
\end{cases}
\end{align*}
We check that these $\pi_{1}$ and $\pi_{2}$ satisfy conditions (i), (ii), and (iii).
%We first check easier conditions (i) and (iii), and then show (ii).

\paragraph{Conditions (i) and (iii):}
There are three cases corresponding to the definition of $\pi_{1}(u)$.
\smallskip

If $u\in K$, then we have $\pi_{1}(u)=1$ and $\pi_{2}(u)=d[g_{2}](u)$.
Then, $\pi_{1}(u)+\pi_{2}(u)-1=d[g_{2}](u)\leq \max\{d[g_{1}](u), ~d[g_{2}](u)\}$.
Also,  $\pi_{i}(u)\leq d[g_{i}](u)$ for each $i\in \{1,2\}$. 
\medskip

If $u\in U\setminus K$ and $P[g_{1}](u)\cap K=\emptyset$,
then $\pi_{1}(u)=\pi'_{1}(u)$ and $\pi_{2}(u)=\pi'_{2}(u)$.
By \eqref{eq:induction1} and \eqref{eq:reduction1}, we have
$\pi_{1}(u)+\pi_{2}(u)-1=\pi'_{1}(u)+\pi'_{2}(u)-1\leq \max\{d[g'_{1}](u), ~d[g'_{2}](u)\}\leq \max\{d[g_{1}](u), ~d[g_{2}](u)\}$.
%and hence the inequality in (i) holds.
Also, \eqref{eq:induction3} and \eqref{eq:reduction1} imply  
$\pi_{i}(u)\leq d[g_{i}](u)$ for each $i\in \{1,2\}$. %the inequality in (ii). 
\medskip

If $u\in U\setminus K$ and $P[g_{1}](u)\cap K\neq\emptyset$,
then $\pi_{1}(u)=\pi'_{1}(u)+1$ and $\pi_{2}(u)=\pi'_{2}(u)$.
From \eqref{eq:bipartite} and \eqref{eq:reduction2},
we have $d[g'_{1}](u)<d[g_{1}](u)$ and $d[g'_{2}](u)<d[g_{2}](u)$,
and hence
\[\max\{d[g'_{1}](u), ~d[g'_{2}](u)\}<\max\{d[g_{1}](u), ~d[g_{2}](u)\}.\]
With \eqref{eq:induction1}, this implies
$\pi_{1}(u)+\pi_{2}(u)-1=(\pi'_{1}(u)+\pi'_{2}(u)-1)+1\leq \max\{d[g'_{1}](u), ~d[g'_{2}](u)\}+1\leq \max\{d[g_{1}](u), ~d[g_{2}](u)\}$.
%and hence the inequality in (i) holds.
Also, \eqref{eq:induction3} and $d[g'_{i}](u)<d[g_{i}](u)$ imply  
$\pi_{i}(u)\leq d[g_{i}](u)$ for each $i$. %the inequality in (ii). 

\paragraph{Condition (ii):}
We  show $|\pi_{i}(Z)|\geq g_{i}(Z)$ for any $i\in \{1,2\}$ and $Z\in \F_{i}$.
If $g_{i}(Z)\leq 0$, the claim is clear.
Also, if $g_{i}(Z)=1$, then the assumption $|Z|\geq g_{i}(Z)$ implies $Z\neq \emptyset$ and hence
$|\pi_{i}(Z)|\geq 1=g_{i}(Z)$. 
Therefore, let us assume $g_{i}(Z)\geq 2$.

By Claims~\ref{claim:BP01} and \ref{claim:BP02}, then
there exist $\hat{Z}\in \E[g_{i}]$ and $\hat{P}\in \BP[g_{i}]\cap \E[g_{i}]$ 
satisfying 
\begin{equation}
\hat{Z}\subseteq Z,~~\hat{Z}\subseteq \hat{P},~~g_{i}(\hat{Z})\geq g_{i}(Z).\label{eq:color}
\end{equation}
(In particular, $\hat{Z}=Z$ if $Z\in \E[g_{i}]$.)
We need to consider cases $i=1$ and $i=2$ separately.

\medskip
\noindent\underline{Case $i=1$.}
Note that $\hat{P}$ in \eqref{eq:color} satisfies $\hat{P}=P[g_{1}](u)$ for all $u\in \hat{Z}\subseteq \hat{P}$.
Therefore, by the definition of $\pi_{1}$, if $\hat{P}\cap K=\emptyset$, then
$\pi_{1}(u)=\pi'_{1}(u)$ for all $u\in \hat{Z}\setminus K$.
Also, if $\hat{P}\cap K\neq \emptyset$, then
$\pi_{1}(u)=\pi'_{1}(u)+1$ for all $u\in \hat{Z}\setminus K$.
Thus, in both cases, we have 
\begin{equation}
|\pi_{1}(\hat{Z}\setminus K)|=|\pi'_{1}(\hat{Z}\setminus K)|.\label{eq:color1}
\end{equation}

If $\hat{Z}\cap K=\emptyset$, then \eqref{eq:color1} means 
$|\pi_{1}(\hat{Z})|=|\pi'_{1}(\hat{Z})|$. 
With \eqref{eq:induction2}, \eqref{eq:gK01}, \eqref{eq:color},
this implies the following inequality. (Note that the monotonicity of $|\pi_{1}(\cdot)|$ and $\hat{Z}\subseteq Z$ imply $|\pi_{1}(\hat{Z})|\leq |\pi_{1}(Z)|$.)
\[|\pi_{1}(Z)|\geq |\pi_{1}(\hat{Z})|=|\pi'_{1}(\hat{Z})|\geq g'_{1}(\hat{Z})\geq g_{1}(\hat{Z})\geq g_{1}(Z).\]

If $\hat{Z}\cap K\neq \emptyset$, then, as mentioned above, $\pi_{1}(u)=\pi'_{1}(u)+1>1$ for all $u\in \hat{Z}\setminus K$.
This implies $1\not\in \pi_{1}(\hat{Z}\setminus K)$.
Since $\pi_{1}(u)=1$ for any $u\in \hat{Z}\cap K\neq \emptyset$,
we have $|\pi_{1}(\hat{Z})|=|\pi_{1}(\hat{Z}\setminus K)|+1$. 
With \eqref{eq:induction2}, \eqref{eq:gK02}, \eqref{eq:color}, \eqref{eq:color1}, this implies
\[|\pi_{1}(Z)|\geq |\pi_{1}(\hat{Z})|=|\pi_{1}(\hat{Z}\setminus K)|+1=|\pi'_{1}(\hat{Z}\setminus K)|+1\geq g'_{1}(\hat{Z}\setminus K)+1\geq g_{1}(\hat{Z})\geq g_{1}(Z).\]

\noindent\underline{Case $i=2$.}
By the definition of $\pi_{2}$, we have $\pi_{2}(u)=\pi'_{2}(u)$ for every $u\in U\setminus K$.
Hence,
\begin{equation}
\pi_{2}(\hat{Z}\setminus K)=\pi'_{2}(\hat{Z}\setminus K).\label{eq:color3}
\end{equation}

If $\hat{Z}\cap K=\emptyset$, then \eqref{eq:color3} implies 
$|\pi_{2}(\hat{Z})|=|\pi'_{2}(\hat{Z})|$.
%then $\hat{Z}\setminus K=\hat{Z}$, and hence $g'_{2}(\hat{Z})\geq g_{2}(\hat{Z})$. 
With \eqref{eq:induction2}, \eqref{eq:gK01}, \eqref{eq:color}, this implies 
\[|\pi_{2}(Z)|\geq |\pi_{2}(\hat{Z})|=|\pi'_{2}(\hat{Z})|\geq g'_{2}(\hat{Z})\geq g_{2}(\hat{Z})\geq g_{2}(Z).\]

If $\hat{Z}\cap K\neq \emptyset$, then $\hat{P}$ in \eqref{eq:color} satisfies $\hat{Z}\cap K\subseteq \hat{P}\cap K\neq \emptyset$.
For every $u\in \hat{Z}\setminus K\subseteq \hat{P}$,
since $P[g_{2}](u)=\hat{P}$ holds,
\eqref{eq:reduction2} implies $d[g'_{2}](u)<d[g_{2}](u)$.
Also, $d[g_{2}](u)=g_{2}(\hat{P})$ by Proposition~\ref{prop:BP2}.
By \eqref{eq:induction3}, then every $u\in \hat{Z}\setminus K$ satisfies
\[\pi_{2}(u)=\pi'_{2}(u)\leq d[g'_{2}](u)<d[g_{2}](u)=g_{2}(\hat{P}),\]
and hence 
$g_{2}(\hat{P})\not\in \pi_{2}(\hat{Z}\setminus K)$.
Since any $\hat{u}\in \hat{Z}\cap K\subseteq \hat{P}$
satisfies $\pi_{2}(\hat{u})=d[g_{2}](\hat{u})=g_{2}(\hat{P})$ by the definition of $\pi_{2}$, 
this implies $|\pi_{2}(\hat{Z})|=|\pi_{2}(\hat{Z}\setminus K)|+1$.
By \eqref{eq:induction2}, \eqref{eq:gK02}, \eqref{eq:color}, \eqref{eq:color3}, then
\[|\pi_{2}(Z)|\geq |\pi_{2}(\hat{Z})|=|\pi_{2}(\hat{Z}\setminus K)|+1\geq |\pi'_{2}(\hat{Z}\setminus K)|+1\geq g'_{2}(\hat{Z}\setminus K)+1\geq g_{2}(\hat{Z})\geq g_{2}(Z)\]
follows.
\end{proof}
%\setstretch{1.05}

\section{Properties of Bunch Partitions}\label{sec:BP}
This section shows Propositions~\ref{prop:BP}--\ref{prop:BP4},
which state properties of bunch partitions.
The first subsection gives some basic properties and proves Propositions~\ref{prop:BP} and \ref{prop:BP2}. 
The second shows properties related to reduction by partial transversals
and proves Propositions~\ref{prop:BP3} and \ref{prop:BP4}.

\subsection{Basic Properties}
Let $g:\F\to \Z$ be an intersecting-supermodular function on $U$. 
Recall that the effective set family $\E[g]$ 
is defined as the family of subsets $X\in \F$ satisfying 
\begin{align}
%&~~u\in X\in \F,\label{eq:V1}\\
&~~g(X)\geq 2,\label{eq:2}\\
&\not\exists X'\in \F:[X'\subsetneq X, ~ g(X')\geq g(X)].\label{eq:effec}
\end{align}

\begin{claim}\label{claim:intersect}
If $X, Y\in\E[g]$ are intersecting,  $g(X\cup Y)>\max\{g(X), g(Y)\}$ and $X\cup Y\in \E[g]$. 
\footnote{In contrast,  $\E[g]$ is not closed under taking intersection.}
%That is, $\E[g]$ is closed under taking union. 
\end{claim}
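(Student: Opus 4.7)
The plan is to decompose the claim into two parts: (a) the strict inequality $g(X\cup Y) > \max\{g(X), g(Y)\}$, which is a one-line application of supermodularity; and (b) the membership $X\cup Y \in \E[g]$, which requires verifying the minimality condition \eqref{eq:effec} and is where the real work lies.

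For part (a), since $X, Y \in \F$ are intersecting and $\F$ is an intersecting family, both $X\cup Y$ and $X\cap Y$ belong to $\F$, and the supermodular inequality gives $g(X) + g(Y) \leq g(X\cup Y) + g(X\cap Y)$. Because $X\cap Y$ is a proper subset of both $X$ and $Y$ lying in $\F$ and $X, Y \in \E[g]$, condition \eqref{eq:effec} applied to each of them yields $g(X\cap Y) < \min\{g(X), g(Y)\}$. Substituting in shows $g(X\cup Y) > \max\{g(X), g(Y)\} \geq 2$, which in particular confirms \eqref{eq:2}.

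For part (b), I would argue by contradiction, assuming some $Z \in \F$ satisfies $Z \subsetneq X\cup Y$ and $g(Z) \geq g(X\cup Y)$. Several easy configurations are dispatched first: if $Z\cap X = \emptyset$ then $Z \subseteq Y\setminus X \subsetneq Y$ (using $X\cap Y \neq \emptyset$), so $g(Z) < g(Y) < g(X\cup Y)$, a contradiction; symmetrically $Z\cap Y \neq \emptyset$. Likewise, $Z \subseteq X$ forces $g(Z) \leq g(X) < g(X\cup Y)$, contradiction, and symmetrically $Z \not\subseteq Y$. Thus $Z\cap X$, $Z\cap Y$, $Z\setminus X$, $Z\setminus Y$ are all nonempty.

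The core sub-case is when $X \subseteq Z$ (symmetric for $Y \subseteq Z$): here $Z$ and $Y$ are intersecting, since they share $X\cap Y$, $Z\setminus Y$ contains $X\setminus Y \neq \emptyset$, and $Y\setminus Z \neq \emptyset$ (because $Z\subsetneq X\cup Y$ while $X\subseteq Z$ forces any element of $(X\cup Y)\setminus Z$ to lie in $Y\setminus X$). Supermodularity together with $g(Z\cap Y) < g(Y)$ (apply \eqref{eq:effec} to $Y$, since $Z\cap Y \subsetneq Y$) gives $g(Z\cup Y) > g(Z) \geq g(X\cup Y)$, but $Z\cup Y = X\cup Y$, a contradiction. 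The remaining sub-case, in which $Z$ is intersecting with $X$ while $X \not\subseteq Z$, reduces to the core sub-case by replacing $Z$ with $Z\cup X$: supermodularity and $g(Z\cap X) < g(X)$ yield $g(Z\cup X) > g(Z) \geq g(X\cup Y)$, and $Z\cup X \subseteq X\cup Y$ is either equal to $X\cup Y$ (immediate contradiction) or a proper subset in $\F$ now containing $X$, to which the core-case argument applies. The main obstacle is finding the right order in which to absorb $X$ and $Y$ into the hypothetical competitor $Z$; once one observes that $Z$ can first be pulled up to contain $X$, a single further supermodular step involving $Y$ closes the argument.
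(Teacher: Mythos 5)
Your argument is correct and follows essentially the same route as the paper: the strict inequality is the identical supermodularity computation, and the membership $X\cup Y\in \E[g]$ is again proved by uncrossing a hypothetical violator $Z\subsetneq X\cup Y$ with $X$ and $Y$, using condition \eqref{eq:effec} for $X$ and $Y$ to bound $g$ on the intersections. The only difference is organizational: the paper takes $Z$ to be a minimal maximizer of $g$ among $\F$-subsets of $X\cup Y$, so a single exchange already contradicts maximality, whereas you take an arbitrary violator and absorb $X$ and then $Y$ in two exchanges with a short case analysis.
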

\begin{proof}
As $X, Y\in \E[g]$ are intersecting, we have $g(X)+g(Y)\leq g(X\cup Y)+g(X\cap Y)$.
Also, as $X\cap Y\subsetneq X$, \eqref{eq:effec} for $X$ implies $g(X)>g(X\cap Y)$.
Thus, we obtain $g(Y)<g(X\cap Y)$. 
Similarly, from \eqref{eq:effec} for $Y$, we obtain $g(X)<g(X\cap Y)$.
Thus, $g(X\cup Y)>\max\{g(X), g(Y)\}\geq 2$.

Let  $Z$ be a minimal maximizer of $g$ in $\set{Z'| Z'\in \F,~Z'\subseteq X\cup Y}$. 
To show $X\cup Y\in \E[g]$, it suffices to prove $Z=X\cup Y$.
Suppose, to the contrary, we have $Z\subsetneq X\cup Y$,
which implies either $X\setminus Z\neq \emptyset$ or $Y\setminus Z\neq \emptyset$.
Without loss of generality, let $X\setminus Z\neq \emptyset$.
By definition, $Z$ satisfies $g(Z)\geq g(X\cup Y)>\max\{g(X), g(Y)\}$. 
This implies $Z\not\subseteq X$ and $Z\not\subseteq Y$
since $X$ and $Y$ satisfy \eqref{eq:effec}.
Combining $Z\subseteq X\cup Y$ and $Z\not\subseteq Y$ implies $X\cap Z\neq \emptyset$.
Thus, we have
\[X\setminus Z\neq \emptyset,~~Z\not\subseteq X,~~X\cap Z\neq \emptyset,\]
which mean that $X$ and $Z$ are intersecting.
Then, $g(X)+g(Z)\leq g(X\cup Z)+g(X\cap Z)$.
As $g(X)>g(X\cap Z)$ by \eqref{eq:effec} for $X$, we obtain $g(Z)<g(X\cup Z)$, 
which contradicts the fact that $Z$ is a maximizer.
\end{proof}

Recall that $\BP[g]$ consists of all maximal members of $\E[g]$ and 
the singleton sets $\{u\}$ of all $u\in U\setminus \bigcup \E[g]$.
Then, Claim~\ref{claim:intersect} implies Proposition~\ref{prop:BP} as follows.
\begin{proof}[\bf\em Proof of Proposition~\ref{prop:BP}]
We show that $\BP[g]$ is a partition. 
By the definition of $\BP[g]$, it suffices to show that
the maximal members of $\E[g]$ are all pairwise disjoint.
Suppose, to the contrary, that
distinct  $X, Y\in \E[g]$ are maximal and not disjoint.
Then, they are intersecting, and Claim~\ref{claim:intersect}
implies $X\cup Y\in \E[g]$, which contradicts the maximality of $X$ and $Y$. 
\end{proof}

Recall that $d[g]:U\to \N$ is defined by  
$d[g](u)=\max\{1, \max\set{g(X)|u\in X\in \E[g]}\}$.
Also, recall that $P[g](u)$ is defined as the unique part of $\BP[g]$ 
containing $u$.

\begin{proof}[\bf\em Proof of Proposition~\ref{prop:BP2}]
If $u\in \bigcup \E[g]$, then $P[g](u)$ is a maximal member of $\E[g]$.
Since all maximal members of $\E[g]$ are pairwise disjoint by Proposition~\ref{prop:BP},
any $X\in \E[g]$ with $X\cap P[g](u)\neq \emptyset$ satisfies $X\subseteq P[g](u)$.
Because $u\in P[g](u)$, then the condition $u\in X\in \E[g]$ implies $X\subseteq P[g](u)$,
from which $g(X)\leq g(P[g](u))$ follows because $P[g](u)\in \E[g]$.
Thus, $d[g](u)=\max\set{g(X)|u\in X\in \E[g]}=g(P[g](u))$.
Also $g(P[g](u))\geq 2$ by $P[g](u)\in \E[g]$.

If $u\not\in \bigcup \E[g]$, 
then the claim immediately follows from the definitions of $\BP[g]$ and $d[g]$.
\end{proof}

\subsection{Reduction by a Partial Transversal}
As before, let $g:\F\to \Z$ be an intersecting-supermodular function. 
We also assume 
\begin{equation}
|Z|\geq g(Z)\quad(Z\in \F).\label{eq:assum}
\end{equation}
Under this assumption, \eqref{eq:2} implies the following observation.
\begin{claim}\label{claim:2}
Every $Z\in \E[g]$ satisfies $|Z|\geq 2$.
\finbox
\end{claim}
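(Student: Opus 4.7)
The plan is to chain the two defining inequalities for $\E[g]$ and for the standing assumption \eqref{eq:assum}. Concretely, let $Z \in \E[g]$. Then by the very definition of the effective set family (condition \eqref{eq:2}), we have $g(Z) \geq 2$. On the other hand, since $\E[g] \subseteq \F$, the standing assumption \eqref{eq:assum} of this subsection applies to $Z$ and yields $|Z| \geq g(Z)$. Composing these two bounds gives $|Z| \geq g(Z) \geq 2$, which is the claim.

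There is no real obstacle here: the assertion is essentially a transcription of the two scalar inequalities that any $Z \in \E[g]$ must satisfy under the standing hypothesis $|Z|\geq g(Z)$. I do not expect to invoke supermodularity, the minimality clause \eqref{eq:effec}, the bunch partition structure, or Claim~\ref{claim:intersect}/Proposition~\ref{prop:BP} for this step. The paper's own lead-in (\emph{``Under this assumption, \eqref{eq:2} implies the following observation.''}) already signals that the proof is a single line.
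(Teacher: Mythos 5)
Your proof is correct and matches the paper's (implicit) argument exactly: the paper leaves this as an observation precisely because \eqref{eq:2} gives $g(Z)\geq 2$ and the standing assumption \eqref{eq:assum} gives $|Z|\geq g(Z)$, so $|Z|\geq 2$. Nothing further is needed.
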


Take a partial transversal $K$ of $\BP[g]$
and let $g_{K}:\F_{K}\to \Z$ be the reduction of $g$ by $K$, i.e.,
\begin{equation}
g_{K}(X)=\max\set{\hat{g}_{K}(Z)|Z\in \F,~Z\setminus K=X}, \label{eq:gK}
\end{equation}
where $\hat{g}_{K}(Z)=g(Z)-1$ for $Z\in \F$ with $Z\cap K\neq\emptyset$
and $\hat{g}_{K}(Z)=g(Z)$ for $Z\in \F$ with $Z\cap K=\emptyset$.
We often use the following observation. 
\begin{claim}
For any $Z', Z\in \F$, we have
\begin{align}
 [~Z'\subseteq Z,~ g(Z')\geq g(Z)~] &\implies \hat{g}_{K}(Z')\geq \hat{g}_{K}(Z),\label{eq:ghat1}\\
  g(Z')>g(Z) &\implies \hat{g}_{K}(Z')\geq \hat{g}_{K}(Z).  \label{eq:ghat2}
\end{align}
\end{claim}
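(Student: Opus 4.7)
The plan is to prove both implications by a short case analysis on whether $Z' \cap K$ is empty, using only the defining formula of $\hat{g}_{K}$ (which differs from $g$ by either $0$ or $1$) together with the set-theoretic observation that $Z' \subseteq Z$ forces $Z \cap K \supseteq Z' \cap K$.

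For \eqref{eq:ghat1}, assume $Z' \subseteq Z$ and $g(Z') \geq g(Z)$. If $Z' \cap K = \emptyset$ then $\hat{g}_{K}(Z') = g(Z') \geq g(Z) \geq \hat{g}_{K}(Z)$, where the last inequality uses that $\hat{g}_{K}(Z)$ equals either $g(Z)$ or $g(Z) - 1$. If $Z' \cap K \neq \emptyset$, then $Z \cap K \neq \emptyset$ too (since $Z' \subseteq Z$), so both reductions drop by exactly $1$ and we get $\hat{g}_{K}(Z') = g(Z') - 1 \geq g(Z) - 1 = \hat{g}_{K}(Z)$.

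For \eqref{eq:ghat2}, assume $g(Z') > g(Z)$, i.e., $g(Z') \geq g(Z) + 1$. In either case for $Z' \cap K$, one checks that $\hat{g}_{K}(Z') \geq g(Z') - 1 \geq g(Z) \geq \hat{g}_{K}(Z)$, where the first inequality holds since a reduction decreases the value by at most $1$ and the last inequality holds since $\hat{g}_{K}(Z) \leq g(Z)$.

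There is no genuine obstacle here: the claim is a bookkeeping lemma about how the modification $\hat{g}_{K}$ compares on nested or $g$-comparable pairs. The only subtlety worth flagging is the use of the containment $Z' \subseteq Z$ in \eqref{eq:ghat1} to transfer the condition $Z' \cap K \neq \emptyset$ to $Z$; in \eqref{eq:ghat2} no such containment is available, which is why the slack $g(Z') - g(Z) \geq 1$ is needed to absorb the possible drop of $1$ in the reduction.
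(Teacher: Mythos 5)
Your proof is correct and is essentially the paper's argument spelled out in full: the paper's one-line proof consists precisely of the observation that $Z'\subseteq Z$ transfers $Z'\cap K\neq\emptyset$ to $Z\cap K\neq\emptyset$ (your case analysis for \eqref{eq:ghat1}), and notes that \eqref{eq:ghat2} is clear since the reduction drops the value by at most $1$, which is exactly your slack argument.
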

\begin{proof}
For \eqref{eq:ghat1}, observe that $Z'\subseteq Z$ implies $[Z'\cap K\neq \emptyset \implies Z\cap K\neq \emptyset]$.
\eqref{eq:ghat2} is clear.
\end{proof}

\begin{proof}[\bf\em Proof of Proposition~\ref{prop:BP3}]
We show $|X|\geq g_{K}(X)$ for any $X\in \F_{K}$. 
By the definition of $g_{K}$, it suffices to show
$|Z\setminus K|\geq \hat{g}_{K}(Z)$ for every $Z\in \F$.
%where $\hat{g}_{K}$ is defined by
%$\hat{g}_{K}(X)=g(X)-1$ for $X\in \F$ with $X\cap K\neq\emptyset$
%and $\hat{g}_{K}(X)=g(X)$ for $X\in \F$ with $X\cap K=\emptyset$.
If $Z\cap K=\emptyset$, then \eqref{eq:assum} immediately implies
$|Z\setminus K|=|Z|\geq g(Z)=\hat{g}_{K}(Z)$.
Also, if $Z\cap K\neq \emptyset$ and $g(Z)\leq 1$, then  $\hat{g}_{K}(Z)\leq 0$,
and hence clearly $|Z\setminus K|\geq \hat{g}_{K}(Z)$.
Therefore, we assume $Z\cap K\neq \emptyset$ and $g(Z)\geq 2$.

In the case $Z\in \E[g]$,
by Claim~\ref{claim:BP01}, there is $P\in \BP[g]\cap \E[g]$ with $Z\subseteq P$.
Since $K$ is a partial transversal of $\BP[g]$, we have $|Z\cap K|\leq |P\cap K|\leq 1$.
By \eqref{eq:assum} and $Z\cap K\neq \emptyset$, 
then $|Z\setminus K|=|Z|-1\geq g(Z)-1=\hat{g}_{K}(Z)$.

In the case $Z\not\in \E[g]$,
Claim~\ref{claim:BP02} implies 
that some $Z'\in \F$ and $P\in \BP[g]\cap \E[g]$ satisfy $Z'\subseteq Z\cap P$ and $g(Z')\geq g(Z)$.
As $K$ is a partial transversal of $\BP[g]$, 
we have $|Z'\cap K|\leq |P\cap K|\leq 1$. 
With \eqref{eq:assum} for $Z'$, this implies 
$|Z\setminus K|\geq |Z'\setminus K|\geq |Z'|-1\geq g(Z')-1\geq g(Z)-1=\hat{g}_{K}(Z)$.
\end{proof}

Let us consider the effective set family $\E[g_{K}]\subseteq \F_{K}$ 
and the bunch partition $\BP[g_{K}]$ 
defined for the reduction $g_{K}$.
They are families on $U\setminus K$. 
Note that they do not necessarily coincide with
$\set{Z\setminus K|Z\in \E[g]}$ and
$\set{P\setminus K|P\in \BP[g]}$
(see Example~\ref{ex:example1}).

To show Proposition~\ref{prop:BP4}, we prepare the following five claims.

\begin{claim}\label{claim:refine0}
For every $X\in \E[g_{K}]$, there exists $P\in \BP[g]\cap \E[g]$ such that $X\subseteq P\setminus K$.
\end{claim}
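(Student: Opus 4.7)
The plan is to produce the desired $P$ by starting from a witness $Z\in\F$ for $g_K(X)$, descending to an effective set of $g$ contained in some bunch, and then using the effectiveness of $X$ in $\E[g_K]$ to force equality after removing~$K$.

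First I would pick $Z\in\F$ achieving the maximum in the definition \eqref{eq:gK} of $g_K$, so $Z\setminus K=X$ and $\hat g_K(Z)=g_K(X)\geq 2$. In either case of the definition of $\hat g_K$, this forces $g(Z)\geq 2$, putting us in position to apply the basic facts about $\E[g]$ and $\BP[g]$.

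Next I would produce a $g$-effective set $Z'$ sitting inside some bunch that dominates $Z$: if $Z\in\E[g]$, take $Z':=Z$ and use Claim~\ref{claim:BP01} to pick $P\in\BP[g]\cap\E[g]$ with $Z'\subseteq P$; otherwise apply Claim~\ref{claim:BP02} to obtain $Z'\in\E[g]$ with $Z'\subseteq Z$, $g(Z')\geq g(Z)$, and $P\in\BP[g]\cap\E[g]$ with $Z'\subseteq P$. In both cases we have $Z'\subseteq Z$ with $g(Z')\geq g(Z)$ and $Z'\subseteq P$ for some bunch $P\in\BP[g]\cap\E[g]$.

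Then I would transfer this to the reduction. Since $Z'\subseteq Z$ and $g(Z')\geq g(Z)$, \eqref{eq:ghat1} yields $\hat g_K(Z')\geq \hat g_K(Z)=g_K(X)$, and hence by \eqref{eq:gK} we get $g_K(Z'\setminus K)\geq g_K(X)$. Moreover $Z'\setminus K\subseteq Z\setminus K=X$.

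Finally, because $X\in\E[g_K]$, no proper subset $X'\subsetneq X$ in $\F_K$ can satisfy $g_K(X')\geq g_K(X)$, so the inclusion $Z'\setminus K\subseteq X$ cannot be strict; thus $Z'\setminus K=X$, and therefore $X=Z'\setminus K\subseteq P\setminus K$, with $P\in\BP[g]\cap\E[g]$, as required. The only subtle point is the last step, invoking the effectiveness of $X$ in $\E[g_K]$ to rule out a proper containment; once that observation is in place the rest is routine bookkeeping with the definitions.
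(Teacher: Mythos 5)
Your proof is correct and uses essentially the same ingredients as the paper's: a witness $Z$ for $g_K(X)$, Claims~\ref{claim:BP01} and \ref{claim:BP02} to get $Z'\in\E[g]$ inside a part $P\in\BP[g]\cap\E[g]$ with $g(Z')\geq g(Z)$, the monotonicity \eqref{eq:ghat1}, and the effectiveness of $X$ in $\E[g_{K}]$ to force $Z'\setminus K=X$. The only difference is that you argue directly (splitting on whether $Z\in\E[g]$) while the paper phrases the same argument as a proof by contradiction, which is a purely stylistic distinction.
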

\begin{proof}
By \eqref{eq:gK}, there is $Z\in \F$ satisfying $g_{K}(X)=\hat{g}_{K}(Z)$ and $Z\setminus K=X$.
Suppose to the contrary,  that $X\not\subseteq P\setminus K$ for every $P\in \BP[g]\cap \E[g]$.
Since $Z\setminus K=X$, this implies $Z\not\subseteq P$ for every $P\in \BP[g]\cap \E[g]$.
Then, we have $Z\not\in \E[g]$ by the definition of $\BP[g]$.
Since $X\in \E[g_{K}]$ implies $2\leq g_{K}(X)=\hat{g}_{K}(Z)\leq g(Z)$,
Claim~\ref{claim:BP02} implies that there exist $Z'\in \E[g]$ and $P'\in \BP[g]\cap \E[g]$ with 
$Z'\subseteq Z\cap P'$ and $g(Z')\geq g(Z)$.
Then, we have
\[Z'\setminus K\subseteq Z\setminus K=X,~~~~Z'\setminus K\subseteq P'\setminus K,~~~~X\not\subseteq P'\setminus K.\] 
This implies  $Z'\setminus K \subsetneq X$,
and hence $g_{K}(Z'\setminus K)<g_{K}(X)$ follows from  $X\in \E[g_{K}]$.
However, \eqref{eq:gK}, $g(Z')\geq g(Z)$ and \eqref{eq:ghat1} imply
$g_{K}(Z'\setminus K)\geq \hat{g}_{K}(Z')\geq \hat{g}_{K}(Z)=g_{K}(X)$, a contradiction.
\end{proof}

By Claim~\ref{claim:refine0}, we can observe the following structural property of $\BP[g_{K}]$.
\begin{claim}\label{claim:refine1}
$\BP[g_{K}]$ is a refinement of the partition $\set{P\setminus K|P\in \BP[g]}$ of $U\setminus K$.
\finbox
\end{claim}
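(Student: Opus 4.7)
The plan is to argue directly from the definition of $\BP[g_K]$ together with Claim~\ref{claim:refine0}, which has already been established. First I would note that by Proposition~\ref{prop:BP} applied to $g$, the family $\BP[g]$ is a partition of $U$, so the collection $\{P\setminus K \mid P \in \BP[g]\}$ (after discarding any empty members arising from singleton parts fully absorbed into $K$) is a partition of $U\setminus K$. Similarly, $\BP[g_K]$ is a partition of $U\setminus K$ by Proposition~\ref{prop:BP} applied to $g_K$ (using Claim~\ref{claim:reduction} to know $g_K$ is intersecting-supermodular). So "refinement" means: every block $Q\in \BP[g_K]$ is contained in some $P\setminus K$ with $P\in \BP[g]$.

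To verify this, I would do a case split corresponding to the two ways $Q$ can arise in the definition of $\BP[g_K]$. If $Q$ is a maximal member of $\E[g_K]$, then Claim~\ref{claim:refine0} applied to $Q$ yields some $P\in \BP[g]\cap \E[g]$ with $Q\subseteq P\setminus K$, which is exactly what we want. Otherwise $Q=\{u\}$ for some $u\in (U\setminus K)\setminus \bigcup \E[g_K]$; in that case, let $P$ be the unique block of $\BP[g]$ containing $u$ (which exists by Proposition~\ref{prop:BP}). Since $u\notin K$, we have $\{u\}=Q\subseteq P\setminus K$, finishing the argument.

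There is no substantive obstacle here: the work has essentially been offloaded to Claim~\ref{claim:refine0}, which handles the nontrivial case where the maximal $\E[g_K]$-block might in principle "cross" several parts of $\BP[g]$. The singleton case reduces to the mere fact that $\BP[g]$ partitions $U$. So the proof amounts to a short paragraph invoking these two tools, with no calculation required.
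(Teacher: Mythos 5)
Your proposal is correct and matches the paper's intent exactly: the paper states this claim as an immediate consequence of Claim~\ref{claim:refine0} (giving no separate proof), and your argument simply spells out the two cases --- maximal members of $\E[g_{K}]$ handled by Claim~\ref{claim:refine0}, and singletons $\{u\}$ with $u\in (U\setminus K)\setminus \bigcup \E[g_{K}]$ handled by the fact that $\BP[g]$ partitions $U$. Your parenthetical remark about discarding empty sets $P\setminus K$ (which can only arise from singleton parts met by $K$) is a reasonable point of care that the paper leaves implicit.
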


\begin{claim}\label{claim:ZB}
For $Z\in \F$ and $P\in \BP[g]\cap\E[g]$ with
$Z\setminus K\subseteq P\setminus K$,
we have $\hat{g}_{K}(Z)\leq\hat{g}_{K}(P)$.\\
In particular, if $Z\setminus K\subsetneq P\setminus K$ and $P\cap K=\emptyset$, 
then $\hat{g}_{K}(Z)<\hat{g}_{K}(P)$.
\end{claim}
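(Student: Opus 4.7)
The plan is to reduce the claim about the reduction values $\hat{g}_{K}$ to the cleaner inequality $g(Z)\leq g(P)$, and then recover the $-1$ corrections by a short case split on whether $Z\cap K$ and $P\cap K$ are empty. First, dispatch the trivial case $g(Z)\leq 1$: because $P\in \E[g]$ forces $g(P)\geq 2$, we have $\hat{g}_{K}(P)\geq 1$ while $\hat{g}_{K}(Z)\leq g(Z)\leq 1$, and the main inequality is immediate. For the ``in particular'' clause, $P\cap K=\emptyset$ gives $\hat{g}_{K}(P)=g(P)\geq 2>1\geq \hat{g}_{K}(Z)$. So from now on I would assume $g(Z)\geq 2$.

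The main obstacle is establishing $g(Z)\leq g(P)$ without assuming $Z\subseteq P$. My plan is to invoke Claim~\ref{claim:BP01} (if $Z\in \E[g]$) or Claim~\ref{claim:BP02} (if $Z\notin \E[g]$), producing $X'\in \E[g]$ and $P''\in \BP[g]\cap \E[g]$ with $X'\subseteq Z\cap P''$ and $g(X')\geq g(Z)$; in particular $|X'|\geq g(X')\geq 2$. I then force $P''=P$: if $P''\neq P$, Proposition~\ref{prop:BP} gives $P\cap P''=\emptyset$, so $X'\subseteq Z\cap P''\subseteq Z\setminus P$. But the hypothesis $Z\setminus K\subseteq P\setminus K$ rewrites as $Z\setminus P\subseteq K$, so $X'\subseteq K\cap P''$; since $K$ is a partial transversal of $\BP[g]$, $|X'|\leq |P''\cap K|\leq 1$, contradicting $|X'|\geq 2$. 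Hence $X'\subseteq P$, and effectiveness of $P$ (plus $X'\in \F$) yields $g(Z)\leq g(X')\leq g(P)$. This disjointness-meets-partial-transversal argument is where the core work lies.

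With $g(Z)\leq g(P)$ in hand, the translation to $\hat{g}_{K}$ is a four-way case analysis on whether each of $Z\cap K$ and $P\cap K$ is empty. The only subtle case is when $P\cap K=\emptyset$ together with $Z\cap K=\emptyset$: then $Z\setminus P\subseteq K$ becomes $Z\setminus P=\emptyset$, so $Z\subseteq P$, and if $Z\subsetneq P$ the effectiveness of $P$ strengthens $g(Z)\leq g(P)$ to $g(Z)<g(P)$. In every other sub-case the $-1$ corrections on the two sides of $\hat{g}_{K}(Z)\leq \hat{g}_{K}(P)$ align (or the one on the left helps), so the bare inequality $g(Z)\leq g(P)$ suffices. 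For the strict ``in particular'' assertion, $Z\setminus K\subsetneq P\setminus K=P$ combined with $Z\cap K=\emptyset$ forces $Z\subsetneq P$, strict by effectiveness, while $Z\cap K\neq \emptyset$ injects the $-1$ on the left against $\hat{g}_{K}(P)=g(P)$; either branch yields the strict inequality.
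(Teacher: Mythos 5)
Your reduction of the problem to the single inequality $g(Z)\leq g(P)$ discards exactly the information needed in one of your four sub-cases, and the case analysis as written fails there. In the sub-case $Z\cap K=\emptyset$, $P\cap K\neq\emptyset$ the $-1$ correction sits only on the right-hand side: you need $\hat{g}_{K}(Z)=g(Z)\leq g(P)-1=\hat{g}_{K}(P)$, so the bare inequality $g(Z)\leq g(P)$ does not suffice, contrary to your assertion that in every sub-case other than $Z\cap K=P\cap K=\emptyset$ the corrections ``align or help.'' (The sub-case you single out as the subtle one, $Z\cap K=P\cap K=\emptyset$, is actually harmless for the non-strict statement, since there $\hat{g}_{K}(Z)=g(Z)\leq g(P)=\hat{g}_{K}(P)$ directly.) The missing strictness is true and obtainable with the tools you already use: since $Z\setminus K\subseteq P\setminus K$ rewrites as $Z\setminus P\subseteq K$, the assumption $Z\cap K=\emptyset$ forces $Z\subseteq P$, and $Z\neq P$ because $P$ meets $K$ while $Z$ does not; effectiveness of $P$ then gives $g(Z)<g(P)$. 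So the proof is repairable by redeploying your containment-plus-effectiveness step in this sub-case, but as written that step of the argument would fail.

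For the record, the rest of your argument mirrors the paper's proof: both locate $X'\in\E[g]$ (the paper's $Z'$) inside $Z$ via Claims~\ref{claim:BP01} and \ref{claim:BP02} and force its bunch to be $P$ (the paper notes $Z'\setminus K$ is nonempty and lies in both parts; your contradiction via $X'\subseteq K\cap P''$ and partial transversality is an equivalent variant), and your handling of the trivial case and of the strict ``in particular'' clause is fine. The paper avoids your pitfall by not collapsing to $g(Z)\leq g(P)$: it keeps the dichotomy ``$Z'=P\subseteq Z$ with $g(P)\geq g(Z)$'' versus ``$g(Z')<g(P)$'' and concludes with \eqref{eq:ghat1} and \eqref{eq:ghat2}, so either the containment $P\subseteq Z$ makes the $-1$ corrections compatible or the strict slack absorbs any mismatch.
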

\begin{proof}
Take $Z\in \F$ and $P\in \BP[g]\cap\E[g]$ with $Z\setminus K\subseteq P\setminus K$.
Note that $P\in \E[g]$ implies $g(P)\geq 2$ by \eqref{eq:2}, and hence $\hat{g}_{K}(P)\geq 1$.
Then, it suffices to consider the case $\hat{g}_{K}(Z)\geq 2$.
Therefore, we assume $g(Z)\geq 2$.

As $Z\in \F$,
Claims~\ref{claim:BP01} and \ref{claim:BP02} imply that there are $Z'\in \E[g]$ and $P'\in \BP[g]\cap \E[g]$ 
satisfying $Z'\subseteq Z\cap P'$ and $g(Z')\geq g(Z)$.
(In particular, $Z'=Z$ if $Z\in \E[g]$.)
As $K$ is a partial transversal,  $|Z'\cap K|\leq |P'\cap K|\leq 1$.
Also, $Z'\in \E[g]$ implies $|Z'|\geq 2$ by Claim~\ref{claim:2}.
Thus, we have
\[\emptyset\neq Z'\setminus K\subseteq Z\setminus K\subseteq P\setminus K,~~~~
Z'\subseteq P'.\]
As $\BP[g]$ is a partition, these two imply $P=P'$.
Thus, $Z'\subseteq P$, and hence \eqref{eq:effec} for $P\in \E[g]$ implies either $Z'=P$ or $g(Z')<g(P)$.
In the case $Z'=P$, we have $P=Z'\subseteq Z$ and $g(P)=g(Z')\geq g(Z)$, which imply 
$\hat{g}_{K}(Z)\leq\hat{g}_{K}(P)$ by \eqref{eq:ghat1}.
In the case $g(Z')<g(P)$, we have $\hat{g}_{K}(Z)\leq\hat{g}_{K}(P)$ by \eqref{eq:ghat2}.
Thus, the first statement is shown.

For the second statement, assume
$Z\setminus K\subsetneq P\setminus K$.
By $Z'\subseteq Z$, this implies $Z'\setminus K\subsetneq P\setminus K$, and hence $Z'\neq P$. 
By the above argument, then $g(Z')<g(P)$. 
Therefore, when $P\cap K=\emptyset$, we have
$\hat{g}_{K}(Z)\leq g(Z)\leq g(Z')<g(P)=\hat{g}_{K}(P)$.
Thus, $\hat{g}_{K}(Z)<\hat{g}_{K}(P)$ is obtained.
\end{proof}

\begin{claim}\label{claim:ZB2}
For every $P\in \BP[g]\cap\E[g]$ with $P\cap K=\emptyset$, we have $P\in \BP[g_{K}]\cap\E[g_{K}]$
and $g_{K}(P)=g(P)$.
\end{claim}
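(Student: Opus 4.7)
The plan is to prove the three assertions in sequence, each being a direct consequence of Claim~\ref{claim:ZB} together with the fact that $P\cap K = \emptyset$ gives $P \setminus K = P \in \F_K$.

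First I would establish the value equality $g_K(P) = g(P)$. The inequality $g_K(P) \geq g(P)$ is immediate from the definition of $g_K$ in \eqref{eq:gK}, since $P$ itself witnesses $\hat{g}_K(P) = g(P)$ (using $P\cap K=\emptyset$). For the reverse inequality, any $Z\in\F$ with $Z\setminus K = P$ satisfies $Z\setminus K = P\setminus K$, so the first part of Claim~\ref{claim:ZB} applied with this $Z$ and $P$ gives $\hat{g}_K(Z) \leq \hat{g}_K(P) = g(P)$; taking the maximum yields $g_K(P) \leq g(P)$.

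Next I would check that $P \in \E[g_K]$. Since $P\in\E[g]$ we have $g(P)\geq 2$ by \eqref{eq:2}, and hence $g_K(P) = g(P) \geq 2$. For the minimality condition \eqref{eq:effec} in the reduced setting, suppose $X' \in \F_K$ with $X' \subsetneq P$. Choose a maximizing $Z'\in\F$ in \eqref{eq:gK} so that $Z'\setminus K = X'$ and $g_K(X') = \hat{g}_K(Z')$. Then $Z'\setminus K = X' \subsetneq P = P\setminus K$, and since $P\cap K = \emptyset$, the second part of Claim~\ref{claim:ZB} yields $\hat{g}_K(Z') < \hat{g}_K(P) = g_K(P)$, i.e., $g_K(X') < g_K(P)$. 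So $P\in\E[g_K]$.

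Finally, for $P\in\BP[g_K]$ I must show $P$ is a maximal member of $\E[g_K]$. Suppose for contradiction that $P \subsetneq X$ for some $X \in \E[g_K]$. By Claim~\ref{claim:refine0}, there exists $P'\in\BP[g]\cap\E[g]$ with $X \subseteq P'\setminus K \subseteq P'$, so $P \subsetneq P'$. But $P$ and $P'$ are both in the partition $\BP[g]$ by Proposition~\ref{prop:BP}, and $P$ is nonempty (being in $\E[g]$, so $|P|\geq 2$ by Claim~\ref{claim:2}), contradicting disjointness. Hence $P$ is maximal in $\E[g_K]$, so $P\in\BP[g_K]$. No step looks particularly obstructive; the entire argument is a bookkeeping exercise built on Claim~\ref{claim:ZB} and Claim~\ref{claim:refine0}, which have already done the real work.
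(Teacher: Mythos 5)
Your proof is correct and follows essentially the same route as the paper: the value equality and membership in $\E[g_{K}]$ are obtained from the two parts of Claim~\ref{claim:ZB} exactly as in the paper's argument, and your maximality step via Claim~\ref{claim:refine0} and the partition property of $\BP[g]$ is just an unpacking of the paper's appeal to Claim~\ref{claim:refine1}.
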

\begin{proof}
Since $P\cap K=\emptyset$, we have $P\setminus K=P$ and $\hat{g}_{K}(P)=g(P)$.
By \eqref{eq:gK}, the value $g_{K}(P)$ is defined as 
$g_{K}(P)=\max\set{\hat{g}_{K}(Z)|Z\in \F,~Z\setminus K=P}$.
Here, the maximum of the right-hand side is attained by $P$ itself because
any $Z\in \F$ with $Z\setminus K=P=P\setminus K$
satisfies $\hat{g}_{K}(Z)\leq\hat{g}_{K}(P)$ by the first statement of Claim~\ref{claim:ZB}.
Thus, $g_{K}(P)=\hat{g}_{K}(P)=g(P)$.
Also, the second statement of Claim~\ref{claim:ZB} implies that,
for any $X\in \F_{K}$ with $X\subsetneq P\setminus K=P$, we have 
$g_{K}(X)=\max\set{\hat{g}_{K}(Z)|Z\in \F,~Z\setminus K=X}<\hat{g}_{K}(P)=g_{K}(P)$.
Hence, $P$ belongs to $\E[g_{K}]$. 
Also, by Claim~\ref{claim:refine1} and $P\setminus K=P\in \BP[g]$,
$P$ is maximal in $\E[g_{K}]$. Thus, $P\in \BP[g_{K}]\cap \E[g_{K}]$. 
\end{proof}

\begin{claim}\label{claim:ZB3}
For every $P\in \BP[g]\cap\E[g]$ with $P\cap K\neq\emptyset$ and
$X\in \F_{K}$ with $X\subseteq P\setminus K$, we have $g_{K}(X)<g(P)$.
\end{claim}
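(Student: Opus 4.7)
The plan is to reduce Claim~\ref{claim:ZB3} directly to the first statement of Claim~\ref{claim:ZB}, exploiting the fact that $P \cap K \neq \emptyset$ forces $\hat{g}_{K}(P) = g(P) - 1$. No new structural arguments beyond what has already been developed should be needed; the work is essentially bookkeeping.

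First, unfold the definition of $g_{K}(X)$ from \eqref{eq:gK}: we have
\[
g_{K}(X) = \max\set{\hat{g}_{K}(Z) \mid Z \in \F,~ Z\setminus K = X},
\]
so it suffices to prove $\hat{g}_{K}(Z) < g(P)$ for every $Z \in \F$ with $Z \setminus K = X$. Fix such a $Z$. Since $X \subseteq P \setminus K$, we have $Z \setminus K \subseteq P \setminus K$, and this is exactly the hypothesis of the first (non-``In particular'') statement of Claim~\ref{claim:ZB} applied to $Z$ and the bunch $P \in \BP[g] \cap \E[g]$. That statement gives
\[
\hat{g}_{K}(Z) \leq \hat{g}_{K}(P).
\]

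Next, evaluate $\hat{g}_{K}(P)$. By hypothesis $P \cap K \neq \emptyset$, so by the definition of $\hat{g}_{K}$ we have $\hat{g}_{K}(P) = g(P) - 1$. Combining the two gives $\hat{g}_{K}(Z) \leq g(P) - 1 < g(P)$ for every admissible $Z$, and taking the maximum over such $Z$ yields $g_{K}(X) \leq g(P) - 1 < g(P)$, which is the desired inequality.

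There is no real obstacle here; the only point to double-check is that we are genuinely entitled to invoke Claim~\ref{claim:ZB} for every $Z$ with $Z \setminus K = X$, which we are, since its hypothesis uses only $Z \setminus K \subseteq P \setminus K$ and membership $P \in \BP[g] \cap \E[g]$, both of which we have. The role of the assumption $P \cap K \neq \emptyset$ in this claim (as opposed to the complementary Claim~\ref{claim:ZB2}, where $P \cap K = \emptyset$ was used to arrive at equality $g_K(P)=g(P)$) is precisely to knock $\hat{g}_K(P)$ strictly below $g(P)$ via the ``$-1$'' in the definition of $\hat{g}_K$.
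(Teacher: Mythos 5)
Your proof is correct and follows the same route as the paper's: unfold $g_{K}(X)$ via \eqref{eq:gK}, apply the first statement of Claim~\ref{claim:ZB} to each $Z$ with $Z\setminus K=X$ to get $\hat{g}_{K}(Z)\leq \hat{g}_{K}(P)$, and use $P\cap K\neq\emptyset$ to conclude $\hat{g}_{K}(P)=g(P)-1<g(P)$. No gaps.
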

\begin{proof}
Since $P\cap K\neq\emptyset$, we have $\hat{g}_{K}(P)=g(P)-1<g(P)$.
By \eqref{eq:gK}, we have
$g_{K}(X)=\max\set{\hat{g}_{K}(Z)|Z\in \F,~Z\setminus K=X}$,
which is at most $\hat{g}_{K}(P)$ because
$Z\setminus K=X\subseteq P\setminus K$ implies $\hat{g}_{K}(Z)\leq\hat{g}_{K}(P)$ by the first statement of Claim~\ref{claim:ZB}.
Therefore, $g_{K}(X)\leq \hat{g}_{K}(P)<g(P)$.
\end{proof}

We are now ready to show Proposition~\ref{prop:BP4}.
\begin{proof}[\bf\em Proof of Proposition~\ref{prop:BP4}]
Take any $u\in U\setminus K$.
If $u\in U\setminus \bigcup \E[g]$, then $u\in U\setminus \bigcup \E[g_{K}]$ 
by Claim~\ref{claim:refine0}.
Then, $P[g_{K}](u)=P[g](u)=\{u\}$, and hence $P[g_{K}](u)\cap K=\{u\}\cap K=\emptyset$ because $u\in U\setminus K$.
We also have $d[g_{K}](u)=d[g](u)=1$. 
Thus, the claim is proved for this case.
Therefore, we assume $P[g](u)\in \E[g]$. %This implies $g(Z)\geq 2$.
\smallskip

In the case $P[g](u)\cap K=\emptyset$,  
Claim~\ref{claim:ZB2} implies $P[g_{K}](u)=P[g](u)$ and $g_{K}(P[g](u))=g(P[g](u))$.
By Proposition~\ref{prop:BP2}, then we have 
$d[g_{K}](u)=g_{K}(P[g_{K}](u))=g(P[g](u))=d[g](u)$.
Thus, $d[g_{K}](u)=d[g](u)$ is obtained.
\smallskip

In the case $P[g](u)\cap K\neq \emptyset$, we have either
$P[g_{K}](u)=\{u\}$ or $P[g_{K}](u)\in \E[g_{K}]$. 
If $P[g_{K}](u)=\{u\}$, then $d[g_{K}](u)=1<2\leq g(P[g](u))=d[g](u)$
follows from Proposition~\ref{prop:BP2} and $P[g](u)\in \E[g]$.
Thus, $d[g_{K}](u)<d[g](u)$.
If $P[g_{K}](u)\in \E[g_{K}]$, then
$u\in P[g](u)\cap P[g_{K}](u)$ and Claim~\ref{claim:refine1}
imply $P[g_{K}](u)\subseteq P[g](u)\setminus K$.
Then,  $g_{K}(P[g_{K}](u))<g(P[g](u))$ by Claim~\ref{claim:ZB3}.
By Proposition~\ref{prop:BP2}, this means $d[g_{K}](u)<d[g](u)$.
\end{proof}

\begin{remark}
The results of this paper can be extended to the setting of skew-supermodular coloring \cite{FK09}.
To obtain the key lemma (Lemma~\ref{lem:key}), 
we used intersecting supermodularity directly only 
in the proof of Claim~\ref{claim:intersect}.
We can observe that this claim is also true for skew-supermodular functions.
Moreover, it is shown in Iwata and Yokoi \cite{IY17}
that Proposition~\ref{prop:IY} and Claim~\ref{claim:reduction} 
are true for skew-supermodular functions.
Therefore, Theorem~\ref{thm:main} can extends to skew-supermodular functions.
\end{remark}

\section{Acknowledgments}
I would like to thank Andr\'{a}s Frank for his valuable comments and questions,
which motivated me to work on this subject.
I gratefully acknowledge Tam\'{a}s Kir\'{a}ly, Krist\'{o}f B\'{e}rczi, and Satoru Iwata for their useful comments.
This work was supported by JST CREST, Grant Number JPMJCR14D2, Japan.


\begin{thebibliography}{1}

\bibitem{BBF05}
M.~B{\'a}r{\'a}sz, J.~Becker, and A.~Frank:
\newblock An algorithm for source location in directed graphs, {\em Operations
  Research Letters}, {\bfseries 33} (2005), pp. 221--230.

\bibitem{BKW97}
O.~V. Borodin, A.~V. Kostochka, and D.~R. Woodall:
\newblock List edge and list total colourings of multigraphs, {\em Journal of
  combinatorial theory, Series B}, {\bfseries 71} (1997), pp. 184--204.

\bibitem{Frankbook}
A.~Frank:
\newblock {\em Connections in Combinatorial Optimization, {\em Oxford Lecture
  Series in Mathematics and its Applications, 38}}, Oxford University Press,
  Oxford, 2011.

\bibitem{FK09}
A.~Frank and T.~Kir{\'a}ly:
\newblock A survey on covering supermodular functions, {\em Research Trends in
  Combinatorial Optimization {\em (W. J. Cook, L. Lov{\'a}sz, and J. Vygen,
  eds.)}}, Springer-Verlag,, 2009, pp. 87--126.

\bibitem{Galvin95}
F.~Galvin:
\newblock The list chromatic index of a bipartite multigraph, {\em Journal of
  Combinatorial Theory, Series B}, {\bfseries 63} (1995), pp. 153--158.

\bibitem{IY17}
S.~Iwata and Y.~Yokoi:
\newblock List supermodular coloring, {\em Combinatorica, {\rm to appear}}.

\bibitem{Konig16}
D.~K\H{o}nig:
\newblock Graphok {\'e}s alkalmaz{\'a}suk a determin{\'a}nsok {\'e}s a halmazok
  elm{\'e}let{\'e}re ({H}ungarian; {G}raphs and their application to the theory
  of determinants and sets), {\em Mathematikai {\'e}s
  Term{\'e}szettudom{\'a}nyi {\'E}rtesit\H{o}}, {\bfseries 34} (1916), pp.
  104--119.

\bibitem{Schr85}
A.~Schrijver:
\newblock Supermodular colourings, {\em Matroid {T}heory {\em (L. Lov{\'a}sz
  and A. Recski, eds.)}}, North-Holland, Amsterdam, 1985, pp. 327--343.

\end{thebibliography}
\end{document}